\theoremstyle{plain} 
\newtheorem{theorem}{Theorem}[section]
\newtheorem{proposition}[theorem]{Proposition}
\newtheorem{lemma}[theorem]{Lemma}
\DeclareMathOperator{\Hom}{Hom}
\DeclareMathOperator{\Ext}{Ext}
\theoremstyle{remark}
\newtheorem{remark}[theorem]{Remark}
\theoremstyle{definition}
\newtheorem{definition}[theorem]{Definition}
\newtheorem{construction}[theorem]{Construction}
\title{The Grothendieck Groups Of Discrete Cluster Categories of Dynkin Type \texorpdfstring{$A_{\infty}$}{Ainf}}
\author{Dave Murphy}
\date{}
\begin{document}

\maketitle

\begin{abstract}
In this work we compute the triangulated Grothendieck groups for each of the family of discrete cluster categories of Dynkin type $A_{\infty}$ as introduced by Holm-J\o rgensen.
Subsequently, we also compute the Grothendieck group of a completion of these discrete cluster categories in the sense of Paquette-Yildirim.

\end{abstract}

\section{Introduction}

Cluster categories were introduced in \cite{Buan2007} as an orbit category of the bounded derived category of the module category of a finite-dimensional hereditary algebra $\mathcal{H}$ over a field $k$.
These provide a setting in which to study the tilting theory of $\mathcal{H}$, and are a model for the combinatorics of an associated Fomin-Zelevinsky cluster algebra.

Discrete cluster categories of Dynkin type $A$ are a family of cluster categories introduced by Igusa-Todorov around 2013 \cite{Igusa2013}.
A discrete cluster category of Dynkin type $A_{\infty}$, $\mathcal{C}(\mathscr{Z})$, is a triangulated category associated to a fixed discrete subset $ \mathscr{Z} \subset S^1$ of the unit circle, which has a finite number of accumulation points.
The indecomposable objects of $\mathcal{C}(\mathscr{Z})$ are in a bijection with the arcs between marked points on $S^1$, with $\mathrm{Ext}$-spaces non-vanishing when two arcs cross.
We denote by $\mathcal{C}_n$ the category $\mathcal{C}(\mathscr{Z})$ when $\mathscr{Z}$ has $n$ two-sided accumulation points.

Discrete cluster categories are $\mathrm{Hom}$-finite, Krull-Schmidt, $k$-linear, 2-Calabi-Yau triangulated categories.
This means that they are very well behaved categories, which when considered with their association to the arcs of the discrete subset $\mathscr{Z} \subset S^1$, makes for a family of cluster categories which are easy to work with and give an insight into the behaviour of cluster categories as a whole.
This makes them a powerful tool in the development and understanding of cluster categories.
Recent work on discrete cluster categories includes \cite{Holm2009}, \cite{Gratz2017}, \cite{GZ2021}, \cite{Igusa2013} and \cite{Baur2016}.
Something we also wish to consider is the completion of a discrete cluster category, as studied in \cite{Paquette2020} and \cite{Fisher2014}, which is the category found by formally adding arcs going into the accumulation points.

The Grothendieck group of a triangulated category, $\mathcal{T}$, is defined to be the free abelian group of isomorphism classes of $\mathcal{T}$, modulo the \textit{Euler relations}: $[B]=[A]+[C]$ for each triangle $A \rightarrow B \rightarrow C \rightarrow \Sigma A$ in $\mathcal{T}$.
The Grothendieck group, denoted $K_0^{\operatorname{add}}(\mathcal{T})$, has the universal property that given a function from the isomorphism classes of objects in $\mathcal{T}$ to some abelian group $G$ such that the Euler relations are preserved, then said function factors through a unique homomorphism $K_0^{\operatorname{add}}(\mathcal{T}) \rightarrow G$.

There has been much work done in recent years on the Grothendieck group of cluster categories, and on triangulated categories in general: such as in \cite{Palu2008}, \cite{Naisse2019} and \cite{Fedele2018}.
One can identify the subgroups of the Grothendieck group of an essentially small triangulated category with the dense, strictly full triangulated subcategories of said category, as shown in \cite{Thomason1997}.
This reduces the problem of classifying the dense, strictly full triangulated subcategories to the much more straightforward task of computing the Grothendieck group and its respective subgroups.

In \cite{Barot07}, a computation of the Grothendieck group for any cluster category of a Dynkin quiver was presented. 
Of particular note to this work, they found that the Grothendieck group of a cluster category of $A_n$ alternates between $\mathbb{Z}$ for odd $n$ and trivial for even $n$.
While one may naively believe that a similar periodicity might appear in the Grothendieck groups of $\mathcal{C}_n$, we show that this is not the case; in fact we show

\begin{theorem}[Theorem \ref{Thm:TriGroGroupCn}]
Let $\mathscr{Z}$ be a subset of $S^1$ such that there are exactly $n$ two-sided accumulation points. Then the Grothendieck group of $\mathcal{C}_n$ is
\[
K_0(\mathcal{C}_n) \cong \mathbb{Z}^n
\]
\end{theorem}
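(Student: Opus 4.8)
The plan is to present $K_0(\mathcal{C}_n)$ by generators and relations and then identify it with $\mathbb{Z}^n$ by means of an explicit combinatorial invariant of arcs. Since $\mathcal{C}_n$ is Krull--Schmidt, the free abelian group on isomorphism classes of objects is free on the indecomposables, so $K_0(\mathcal{C}_n)$ is generated by the classes $[\gamma]$ of arcs $\gamma$ between marked points of $\mathscr{Z}$. First I would read off a convenient generating set of Euler relations. Because $\mathcal{C}_n$ is Hom-finite, Krull--Schmidt and $2$-Calabi--Yau it has a Serre functor and hence Auslander--Reiten triangles at every indecomposable; describing the suspension $\Sigma$ and the AR-translate $\tau$ combinatorially (each rotates the endpoints of an arc to neighbouring marked points) turns the AR-triangles, together with the standard triangles $\gamma\to 0\to\Sigma\gamma$, into two families of explicit linear relations among arc-classes: the mesh relations $[\{p^{+},q\}]+[\{p,q^{+}\}]=[\{p^{+},q^{+}\}]+[\{p,q\}]$ and the suspension relations $[\Sigma\gamma]=-[\gamma]$, the former degenerating to a shorter relation whenever a neighbouring arc becomes the illegal arc joining adjacent marked points. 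I would then argue that these relations generate \emph{all} Euler relations, either by invoking the general fact that in a Krull--Schmidt triangulated category with AR-triangles the relations in the Grothendieck group are generated by the AR-triangles, or by appealing directly to the combinatorial classification of triangles in $\mathcal{C}_n$ from \cite{Igusa2013} and \cite{Holm2009}.

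Next I would construct the comparison map. The $n$ accumulation points cut $S^1$ into $n$ open segments, each containing a bi-infinite family of marked points (order type $\mathbb{Z}$), so every marked point $p$ has a well-defined segment $s(p)\in\{1,\dots,n\}$ and a parity $\varepsilon(p)\in\mathbb{Z}/2$, the latter canonical up to a global flip per segment that alters $\phi$ only by an automorphism of $\mathbb{Z}^n$. Writing $e_1,\dots,e_n$ for the standard basis of $\mathbb{Z}^n$, define $\phi$ on arcs by
\[
\phi(\{p,q\})=(-1)^{\varepsilon(p)}e_{s(p)}+(-1)^{\varepsilon(q)}e_{s(q)}\in\mathbb{Z}^n .
\]
A direct check shows that $\phi$ kills the mesh, boundary and suspension relators above (each evaluates to $0=0$, since a single endpoint-shift flips one parity and hence one sign), so by the universal property $\phi$ descends to a homomorphism $\phi\colon K_0(\mathcal{C}_n)\to\mathbb{Z}^n$. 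Evaluating $\phi$ on arcs shows its image is generated by the vectors $2e_i$ (from arcs inside one segment) and $\pm e_i\pm e_j$ (from arcs crossing between segments $i\neq j$); this is exactly the even-sum sublattice $L=\{v:\sum_i v_i\ \text{even}\}$, which is free of rank $n$. Hence $\phi$ exhibits $\mathbb{Z}^n\cong L$ as a quotient of $K_0(\mathcal{C}_n)$.

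To finish I would prove that $K_0(\mathcal{C}_n)$ is generated by $n$ arcs. Using the suspension relations to normalise parities and the mesh relations to shorten arcs, I would reduce the class of an arbitrary arc to a $\mathbb{Z}$-combination of a fixed family $b_1,\dots,b_n$ --- say the $n-1$ shortest cross-segment arcs joining consecutive segments together with one shortest arc inside the last segment --- whose images $\phi(b_1),\dots,\phi(b_n)$ form a basis of $L$. Granting this, $K_0(\mathcal{C}_n)$ is generated by $n$ elements and surjects via $\phi$ onto a free group of rank $n$; the composite $\mathbb{Z}^n\twoheadrightarrow K_0(\mathcal{C}_n)\xrightarrow{\phi}\mathbb{Z}^n$ is then a surjective endomorphism of $\mathbb{Z}^n$, hence an isomorphism, forcing $K_0(\mathcal{C}_n)\cong\mathbb{Z}^n$.

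The two places I expect real work are controlling the relations and the bookkeeping at the accumulation points. Proving that the mesh and suspension relations exhaust the Euler relations is what pins $K_0(\mathcal{C}_n)$ down from above and is essential for $\phi$ to be well defined; if the general AR-generation principle is not available in a citable form, this must be extracted from the explicit description of triangles. The genuinely new difficulty compared with the finite type $A_n$ computation of \cite{Barot07} is that each segment carries an infinite family of arcs accumulating at two points and that arcs may cross accumulation points, so one cannot simply slide an endpoint ``past'' an accumulation point; the reduction in the last step must therefore be carried out within segments, and the crucial point is to verify that the degenerate meshes at the shortest arcs contribute exactly one independent generator per segment --- so that the $n$ accumulation points yield rank exactly $n$, with neither collapse between segments nor any torsion.
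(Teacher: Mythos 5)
Your overall skeleton --- bound the number of generators of $K_0(\mathcal{C}_n)$ above, exhibit a surjection onto $\mathbb{Z}^n$ via an explicit arc invariant, and conclude from the Hopfian property of $\mathbb{Z}^n$ --- is sound, and your invariant $\phi$ is consistent with the answer. But there is a genuine gap at the load-bearing step: the claim that the mesh (AR) relations together with the suspension relations $[\Sigma\gamma]=-[\gamma]$ generate all Euler relations. Neither proposed justification works. The general theorem that AR-triangles generate the relations of $K_0$ (Xiao--Zhu) requires local finiteness, which fails for $\mathcal{C}_n$ since every arc crosses infinitely many arcs; and no classification of arbitrary distinguished triangles (with decomposable end terms) is available in the cited sources. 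Worse, the claim is simply false for $n\geq 2$: every mesh relation involves only arcs joining one fixed (possibly equal) pair of segments, and suspension preserves this decomposition, so the quotient of the free group on arcs by mesh and suspension relations alone splits as a direct sum over pairs of segments and is free of rank $n^2$, not $n$. Concretely, if $M=\{p,q\}$ joins segment $1$ to segment $2$, then $M$ crosses $\Sigma^{-2}M=\{p^{++},q^{++}\}$, and the resulting triangle gives the Euler relation $2[M]=[\{p,p^{++}\}]+[\{q,q^{++}\}]$, linking a cross-segment class to within-segment classes; this is not a consequence of your relators. This defeats both halves of the argument: the reduction of an arbitrary arc to $n$ generators cannot terminate using mesh and suspension alone (there is no ``shortest'' cross-segment arc to induct down to, since endpoints can approach the intervening accumulation point indefinitely), and the well-definedness of $\phi$ on $K_0(\mathcal{C}_n)$ cannot be certified by checking only those relators --- additivity must be verified on all distinguished triangles.

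This is precisely the difficulty the paper's proof is designed to avoid: it invokes Palu's theorem (Theorem \ref{Thm:Palu}) to present $K_0(\mathcal{C}_n)$ as the split Grothendieck group of a single, carefully chosen cluster tilting subcategory modulo the exchange relations $[B_M]-[B_{M^\ast}]$, so that control over the full set of Euler relations is absorbed into that theorem; even there, the cluster tilting subcategory must be chosen so that its split Grothendieck group is generated by simple modules (the leapfrog configuration), as Remark \ref{Rem:Palu} shows the conclusion fails otherwise. To salvage your approach you would need to enlarge the relator set to include the relations from all crossings (not just those of arcs differing by a single endpoint shift) and then prove these exhaust the Euler relations --- at which point checking that $\phi$ kills them is easy (the four endpoints of the two diagonals coincide with the four endpoints of the sides), but establishing exhaustiveness is essentially the content that Palu's theorem supplies.
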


Another result which is made use of in this work is due to Palu \cite{Palu2008}, in which they show that the triangulated Grothendieck group of a triangulated category is isomorphic to the quotient of the split Grothendieck group of a cluster tilting subcategory by a set of specified relations. 
We make heavy use of this result in the proof of Theorem \ref{Thm:TriGroGroupCn}.

Further, we build upon this result by using it to compute the Grothendieck group of a completion of the discrete cluster category, in the sense of Paquette-Yildirim
\cite{Paquette2020}, with $n$ two-sided accumulation points, denoted $\overline{\mathcal{C}}_n$.

\begin{theorem}[Theorem \ref{Thm2}]
Let $\overline{\mathcal{C}}_n$ be the completion of an Igusa-Todorov cluster category with $n$ two-sided accumulation points.
Then the triangulated Grothendieck group of $\overline{\mathcal{C}}_n$ is:
\[
K_0(\overline{\mathcal{C}}_n) \cong \mathbb{Z}^n \oplus (\mathbb{Z}/2\mathbb{Z})^{n-1}
\]
\end{theorem}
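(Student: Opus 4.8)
The plan is to mirror the strategy used for Theorem \ref{Thm:TriGroGroupCn}, applying Palu's presentation of the triangulated Grothendieck group \cite{Palu2008} to a cluster tilting subcategory of the completion $\overline{\mathcal{C}}_n$. First I would fix a cluster tilting subcategory $\mathcal{T}$ of $\overline{\mathcal{C}}_n$ that extends the one chosen for $\mathcal{C}_n$ by adjoining the new \emph{limit arcs}, that is, the indecomposables introduced by the Paquette--Yildirim completion \cite{Paquette2020} whose endpoints lie at the accumulation points. By Palu's theorem, $K_0(\overline{\mathcal{C}}_n)$ is then the split Grothendieck group $K_0^{\operatorname{sp}}(\mathcal{T})$, free abelian on the indecomposable objects of $\mathcal{T}$, modulo the exchange relations $[E_X]=[E'_X]$, one for each indecomposable $X \in \mathcal{T}$, where $E_X$ and $E'_X$ are the middle terms of the two exchange triangles at $X$.

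The generators now split into two families: those already present in $\mathcal{C}_n$, and the finitely many limit arcs. Likewise the relations split into the exchange relations at ordinary arcs, which are exactly those appearing in the proof of Theorem \ref{Thm:TriGroGroupCn}, and the new exchange relations obtained by mutating the limit arcs. I would first establish that, on the subgroup generated by the ordinary arcs, the presentation restricts to the one computing $K_0(\mathcal{C}_n)\cong\mathbb{Z}^n$, so that the free rank of $K_0(\overline{\mathcal{C}}_n)$ is again $n$ and any new phenomena are concentrated in, or forced by, the limit-arc generators. Care is needed here because a priori the new relations may involve ordinary generators as well, so part of the work is to disentangle this interaction.

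The heart of the argument is to write down the exchange triangles at each limit arc using the description of triangles in the completion from \cite{Paquette2020}. I expect that mutating a limit arc $\ell$ joining two accumulation points produces a pair of exchange triangles whose middle terms, because such a point is approached symmetrically from both sides, force a relation of the shape $2[\ell] = (\text{a combination of free generators})$ rather than $[\ell]=(\cdots)$; it is precisely this doubling that will generate the $2$-torsion. Assembling all generators and relations into a presentation matrix, I would then compute its Smith normal form: the ordinary-arc block should contribute the free part $\mathbb{Z}^n$, while the limit-arc block should contribute elementary divisors equal to $2$. The count $n-1$, rather than $n$, ought to follow from a single global dependency among the limit arcs encircling the $n$ accumulation points, analogous to the one relation linking a cyclic system of $n$ exchanges.

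The main obstacle will be the third step: pinning down the exact middle terms of the exchange triangles at the limit arcs. The triangulated structure of the completion is more delicate than that of $\mathcal{C}_n$, and the entire torsion summand hinges on the multiplicities appearing in these triangles, in particular on correctly identifying where an arc occurs with multiplicity two and where a middle term degenerates to zero. Verifying these triangles carefully, and confirming the single global relation that reduces the torsion rank from $n$ to $n-1$, is where the real work lies.
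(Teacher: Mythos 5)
There is a genuine gap, and it is at the very first step: Palu's theorem (Theorem \ref{Thm:Palu}) requires the ambient category to be $2$-Calabi-Yau, and the completion $\overline{\mathcal{C}}_n$ is \emph{not} $2$-Calabi-Yau --- this is one of the properties lost in the Paquette--Yildirim localisation \cite{Paquette2020}. Consequently the entire machinery you propose to transport from the proof of Theorem \ref{Thm:TriGroGroupCn} is unavailable: the symmetry $\operatorname{Ext}^1(M,M^{\ast}) \cong \operatorname{Ext}^1(M^{\ast},M)$ fails at the limit arcs, so one of the two ``exchange triangles'' at a limit arc need not exist, an almost cluster tilting subcategory need not have exactly two complements, and the presentation $K_0^{\operatorname{add}}(\mathcal{T})/\langle [B_{M^{\ast}}]-[B_M]\rangle$ is simply not a theorem in this setting. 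No amount of care in computing middle terms of exchange triangles at limit arcs can rescue the argument, because the statement you are quotienting by is not known (and not claimed) to present $K_0(\overline{\mathcal{C}}_n)$. Your instinct that $2$-torsion should arise from a relation of the shape $2[\ell]=(\cdots)$ is pointing in the right direction numerically, but the mechanism you propose for producing that relation has no foundation here.

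The route that works is entirely different: realise $\overline{\mathcal{C}}_n$ as the Verdier quotient $\mathcal{C}_{2n}[\Omega^{-1}]$ of Construction \ref{Cons:Cbar}, identify the kernel of the localisation functor $\pi$ as $(\mathcal{C}_1)^n$ (Proposition \ref{Prop:ker}), and invoke right-exactness of $K_0$ on the resulting short exact sequence of triangulated categories to obtain an exact sequence $\mathbb{Z}^n \xrightarrow{f} \mathbb{Z}^{2n} \rightarrow K_0(\overline{\mathcal{C}}_n) \rightarrow 0$, where both free groups are computed by Theorem \ref{Thm:TriGroGroupCn}. The whole content is then the computation of $\operatorname{im}(f)$: one checks (Lemmas \ref{Lem:markedp} and \ref{Lemma:Triangle}) that the generators of $K_0((\mathcal{C}_1)^n)$ map to $[X_2]+[Y_1]$ and $2[X_{2i-1}]-[X_2]-[Y_1]$ for $i=2,\ldots,n$, so that $\operatorname{im}(f)$ is spanned by $[X_2]+[Y_1]$ together with $2[X_{2i-1}]$, and the cokernel is $\mathbb{Z}^n \oplus (\mathbb{Z}/2\mathbb{Z})^{n-1}$. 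The factor of $2$ arises not from a mutation at a limit arc but from the fact that the class of an arc with both endpoints in one segment is $0$ or $[W_1]$ according to the parity of the number of marked points it subtends, which forces the doubling in the image of $f$; the drop from $n$ to $n-1$ torsion summands comes from the single generator $[X_2]+[Y_1]$ that is not divisible by $2$.
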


This work is organised as follows; Section \ref{Sec:2} will be a brief discussion on the construction of the subset of $S^1$ associated to the discrete cluster categories of Dynkin type $A_{\infty}$.
For this, we use the definitions found in \cite{Gratz2017} and subsequently follow the constructions as outlined by them.

Section \ref{Sec:3} is where we consider the cluster tilting subcategories of $\mathcal{C}_n$.
Using an appropriate cluster tilting subcategory, as well as its mutations, gives us a way of computing the Grothendieck group of $\mathcal{C}_n$ via a theorem thanks to Palu \cite{Palu2008}.
A complete description of the cluster tilting subcategories is again provided in \cite{Gratz2017}.

Our work begins in Section \ref{Sec:4}, where we compute the Grothendieck group of $\mathcal{C}_n$ for all $n \geq 1$.
We build upon this in Section \ref{Sec:5}, where we look at the completion of the discrete cluster category of Dynkin type $A_{\infty}$, denoted $\overline{\mathcal{C}}_n$, as well as computing its Grothendieck group.
We first go over a brief construction of $\overline{\mathcal{C}}_n$ in the sense of \cite{Paquette2020}, followed by some lemmas necessary to the proof of the Grothendieck groups of $\overline{\mathcal{C}}_n$.
We finish this work with the proof of Theorem \ref{Thm2}.

\subsection*{Acknowledgements}

The author would like to express many thanks to their supervisors, Sira Gratz and Greg Stevenson, for their help and guidance throughout the process of this work, as well as introducing the author to the fascinating world of cluster categories.

The author would also like to thank Yann Palu, not only for the original work that this paper is based upon, but also their grace and understanding when an issue in their statement of Theorem \ref{Thm:Palu} was pointed out.
Thanks must also go to Matthew Pressland for conversations regarding the aforementioned issue, and in helping the author understand the its true nature.

\section{Discrete Cluster Categories of Dynkin Type \texorpdfstring{$A_{\infty}$}{Ainf}}\label{Sec:2}

We start with a brief introduction to the discrete cluster category $\mathcal{C}_n$ of type $A_{\infty}$, as introduced in \cite{Igusa2013}, as well as some other elementary definitions.
We will follow the description of $\mathcal{C}_n$ found in \cite{Gratz2017}, which also provides for many of the other definitions.
This is so one can see the cluster tilting subcategories as maximal collections of non-crossing arcs \cite{Gratz2017} (also see Section \ref{Sec:3}).

One can see the discrete cluster categories via the combinatorics of an $\infty$-gon, with indecomposable objects being arcs non-isotopic to those arcs between consecutive marked points, and $\mathrm{Ext}^1$ spaces between two arcs being non-trivial if and only if the arcs cross.

\subsection{The Marked Points of \texorpdfstring{$S^1$}{S1}}

We formalise this intuitive approach with the following definitions:

\begin{definition}\cite{Igusa2013}
A subset $\mathscr{Z}$ of the circle $S^1$ is called \textit{admissible} if it satisfies the following conditions.
\begin{enumerate}
    \item $\mathscr{Z}$ has infinitely many elements,
    \item $\mathscr{Z} \subset S^1$ is a discrete subset,
    \item $\mathscr{Z}$ satisfies the \textit{two-sided limit condition}, i.e. each $x \in S^1$ which is the limit of a sequence in $\mathscr{Z}$ is a limit of both an increasing and decreasing sequence from $\mathscr{Z}$ with respect to the cyclic order.
\end{enumerate}
\end{definition}
We shall fix $\mathscr{Z} \subset S^1$ throughout, which can be thought of as the vertices of an $\infty$-gon.
The topological closure of $\mathscr{Z}$ is denoted by $\overline{\mathscr{Z}}$.
We denote by $[x,z] \subset \overline{\mathscr{Z}}$ the closed set of marked points from $x$ to $z$ in an anti-clockwise direction.
The half-open sets $(x,z], [x,z)$ and the open set $(x,z)$ are defined analogously.
Given three marked points, $x,y,z \in \mathscr{Z}$, we say that $x < y < z$ if $y \in (x,z)$.

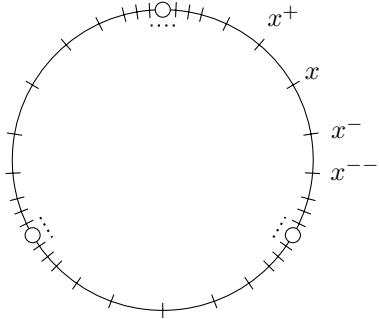
\begin{figure}[ht]
\centering
\begin{tikzpicture}
\draw (2,2) circle (2cm)
(2.36,4.07) -- (2.33,3.87)
(2.49,3.84) -- (2.54,4.03)
(2.17,3.9) -- (2.18,4.1)
(2.89,3.90) -- (2.80,3.72)
(3.22,3.46) -- (3.35,3.61)
(3.82,3.05) -- (3.65,2.95)
(3.87,2.33) -- (4.07,2.36)
(4.09,1.82) -- (3.89,1.83)
(3.84,1.51) -- (4.03,1.46)
(3.97,1.28) -- (3.79,1.35)
(3.72,1.20) -- (3.90,1.11)
(3.72,0.80) -- (3.56,0.91)
(3.46,0.78) -- (3.61,0.65)
(3.48,0.52) -- (3.34,0.66)
(3.09,0.44) -- (3.20,0.28)
(2.72,0.03) -- (2.65,0.21)
(2,0.1) -- (2,-0.1)
(1.28,0.03) -- (1.35,0.21)
(0.91,0.44) -- (0.80,0.28)
(0.52,0.52) -- (0.66,0.66)
(0.39,0.65) -- (0.55,0.78)
(0.44,0.91) -- (0.28,0.80)
(0.10,1.11) -- (0.28,1.20)
(0.21,1.35) -- (0.03,1.28)
(-0.03,1.46) -- (0.16,1.51)
(0.11,1.83) -- (-0.09,1.82)
(-0.07,2.36) -- (0.13,2.33)
(0.35,2.95) -- (0.18,3.05)
(0.65,3.61) -- (0.78,3.46)
(1.20,3.72) -- (1.11,3.90)
(1.46,4.03) -- (1.51,3.84)
(1.67,3.87) -- (1.64,4.07)
(1.82,4.09) -- (1.83,3.89);
\node at (3.99,3.15) {$x$};
\node at (3.61,3.92) {$x^+$};
\node at (4.46,2.43) {$x^-$};
\node at (4.56,1.88) {$x^{--}$};
\draw[dotted, thick] (2.16,3.79) -- (1.84,3.79);
\draw[dotted, thick] (3.47,0.97) -- (3.63,1.24);
\draw[dotted, thick] (0.37,1.24) -- (0.53,0.97);
\draw[fill=white] (0.27,1) circle (0.1cm);
\draw[fill=white] (3.73,1) circle (0.1cm);
\draw[fill=white] (2,4) circle (0.1cm);
\end{tikzpicture}
\caption{An example of an admissible subset $\mathscr{Z} \subset S^1$, thought of as the vertices of an $\infty$-gon. The small circles denote the points to which the points in $\mathscr{Z}$ converge, we call these accumulation points. Since $\mathscr{Z}$ is discrete, these accumulation points are not part of $\mathscr{Z}$.}
\label{fig:Example}
\end{figure}

\begin{definition}\cite[Definition 0.2]{Gratz2017}
A \textit{arc of} $\mathscr{Z}$ is a subset $X=\{x_0,x_1\} \subset \mathscr{Z}$ where $x_1 \notin \{x_0^-, x_0, x_0^+\}$, where $x^+$ and $x^-$ are the successor and predecessor respectively to $x \in \mathscr{Z}$.
If $Y= \{y_0,y_1\}$ is another arc, then $X$ and $Y$ \textit{cross} if $x_0<y_0<x_1<y_1 <x_0$ or $x_0<y_1<x_1<y_0<x_0$.
\end{definition}

We can think of these arcs as lines on Figure \ref{fig:Example} connecting any two points in $\mathscr{Z}$, and arcs cross exactly when these lines intersect each other.
We consider the arcs up to isotopy. 
Note that the definition precludes arcs starting and/or ending at the same point from being considered to cross.

Finally, when we are discussing these categories we need a way of describing the accumulation points, as well as a way of describing the behaviour of a set of arcs in relation to these accumulation points.
Given the topological closure $\overline{\mathscr{Z}}$ of $\mathscr{Z}$, we denote 
\[
L(\mathscr{Z}) = \overline{\mathscr{Z}}\backslash \mathscr{Z}
\]
the discrete set of proper accumulation points of $\mathscr{Z}$.

For convenience, we may impose a cyclic ordering upon $L(\mathscr{Z})$, labelling the accumulation points $a_1, \ldots, a_n$.
Note that due to the discrete nature of $\mathscr{Z}$, this means that it is disjoint from $L(\mathscr{Z})$.

Next we give some names to specific collections of arcs that we may wish to use.

\begin{definition}\cite[Definition 1.4]{Gratz2017}
Let $\{x_i\}_{i \in \mathbb{Z}_{i \geq 0}}$ be a convergent sequence in $\mathscr{Z}$.
If $\{x_i\}_{i \in \mathbb{Z}_{i \geq 0}}$ converges to $p \in \overline{\mathscr{Z}}$, then we write $x_i \rightarrow p$.
\begin{itemize}
    \item We say that $x_i \rightarrow p$ \textit{from below} if there is a $\mu \in S^1 \backslash \{p\}$ such that $x_i \in [\mu, p]$ from some step.
    \item We say that $x_i \rightarrow p$ \textit{from above} if there is a $\nu \in S^1 \backslash \{p\}$ such that $x_i \in [p, \nu]$ from some step.
\end{itemize}
\end{definition}

\begin{definition}\cite[Definition 0.4]{Gratz2017}
Let $\mathscr{X}$ be a set of arcs of $\mathscr{Z}$.
\begin{itemize}
    \item Given $a \in L(\mathscr{Z})$, we say that $\mathscr{X}$ \textit{has a leapfrog converging to} $a \in L(\mathscr{Z})$ if there is a sequence $\{x_i,y_i\}_{i \in \mathbb{Z}_{i \geq 0}}$ of arcs from $\mathscr{X}$ with $x_i \rightarrow a$ from below and $y_i \rightarrow a$ from above.
    \item Given $a \in L(\mathscr{Z})$, $z \in \mathscr{Z}$, we say that $\mathscr{X}$ \textit{has a right fountain at} $z$ \textit{converging to} $a$ if there is a sequence $\{z,x_i\}_{i \in \mathbb{Z}_{i \geq 0}}$ from $\mathscr{X}$ with $x_i \rightarrow a$ from below. We say that $\mathscr{X}$ \textit{has a left fountain at} $z$ \textit{converging to} $a$ if there is a sequence $\{z,y_i\}_{i \in \mathbb{Z}_{i \geq 0}}$ from $\mathscr{X}$ with $y_i \rightarrow a$ from above.
    \item We say that $\mathscr{X}$ \textit{has a fountain at} $z$ \textit{converging to} $a$ if it has a right fountain and a left fountain at $z$ converging to $a$.
\end{itemize}
\end{definition}

\subsection{The Category \texorpdfstring{$\mathcal{C}_n$}{Cn}}

Given $\mathscr{Z}$ and a field $k$, Igusa and Todorov \cite{Igusa2013} constructed a cluster category $\mathcal{C}(\mathscr{Z})$ of Dynkin type $A_{\infty}$ for a fixed subset $\mathscr{Z} \subset S^1$.
This is constructed as a $k$-linear, $\mathrm{Hom}$-finite, Krull-Schmidt, 2-Calabi-Yau triangulated category, where indecomposables are in bijection to the arcs, with non-vanishing $\mathrm{Ext}^1$ groups and crossing arcs in correspondence.
More specifically, we have 
\[
\mathrm{Ext}^1(X,Y) =
\begin{cases*}
k & \text{if} X,Y \text{cross}\\
0 & \text{else}
\end{cases*}
\]
Whereby an abuse of notation, here $X$ and $Y$ denotes both the object in $\mathcal{C}(\mathscr{Z})$ and the corresponding arc in $\mathscr{Z}$.

Throughout we shall instead refer to $\mathcal{C}_n$, meaning the category $\mathcal{C}(\mathscr{Z})$ given a fixed admissible subset $\mathscr{Z}$ with $n$ two-sided accumulation points.

The suspension functor, $\Sigma$, of $\mathcal{C}_n$, acts upon an object $X \in \mathcal{C}_n$ by rotating the corresponding arc around by one marked point in a clockwise direction, i.e. if $X$ corresponds to the arc $\{x_0,x_1\}$, then $\Sigma X$ corresponds to the arc $\{x_0^-,x_1^-\}$.

Crossing arcs in $\mathscr{Z}$ induce triangles in $\mathcal{C}_n$.
For a pair of crossing arcs $M$ and $N$, we can form two quadrilaterals that induce two different triangles in $\mathcal{C}_n$, these quadrilaterals have the form:

\[
\begin{tikzpicture}
   \draw [domain=30:60] plot ({2*cos(\x)}, {2*sin(\x)});
   \draw [domain=210:240] plot ({2*cos(\x)}, {2*sin(\x)});
    \draw [domain=300:330] plot ({2*cos(\x)}, {2*sin(\x)});
   \draw [domain=120:150] plot ({2*cos(\x)}, {2*sin(\x)});
   \draw[thin,dashed] (1.41,1.41)..controls(1,0)..(1.41,-1.41);
   \draw[thin,dotted] (1.41,1.41)..controls(0,1)..(-1.41,1.41);
    \draw[thin,dotted] (-1.41,-1.41)..controls(0,-1)..(1.41,-1.41);
   \draw[thin,dashed] (-1.41,-1.41)..controls(-1,0)..(-1.41,1.41);
   \draw[thin] (1.41,1.41) -- (-1.41,-1.41);
   \draw[thin] (1.41,-1.41) -- (-1.41,1.41);
   \node at (1.3,0) {\scriptsize $X$};
   \node at (0,1.3) {\scriptsize $W$};
    \node at (-1.3,0) {\scriptsize $Y$};
   \node at (0,-1.3) {\scriptsize $Z$};
   \node at (0.25,0.6) {\scriptsize $M$};
   \node at (0.25,-0.6) {\scriptsize $N$};
\end{tikzpicture}
\]
where we have a quadrilateral with the four arcs $M,N,X,Y$ as sides, and another with the arcs $N,M,W,Z$ as sides.
The two triangles induced by the crossing arcs are;

\begin{align*}
    M \rightarrow X \oplus Y \rightarrow N \rightarrow \Sigma M\\
    N \rightarrow W \oplus Z \rightarrow M \rightarrow \Sigma N\\
\end{align*}

Finally, we would sometimes like to consider some specific subcategories of $\mathcal{C}_n$, ones with an equivalence to $\mathcal{C}_1$.
For this purpose, we have the following definition.

\begin{definition}
Let $\mathscr{Z}$ be an admissible subset of $S^1$, then we say a \textit{segment} of $S^1$ is the open set $(a_i,a_{i+1}) \subset S^1$, with $a_i,a_{i+1} \in L(\mathscr{Z})$.
If $i=n$, then the segment is defined as the open set $(a_n,a_1)$.
Note that if there is only one limit point, then we say the only segment of $S^1$ is $S^1 \backslash L(\mathscr{Z})$.
\end{definition}

One may see that there is an injection $\tau: \mathcal{C}_1 \rightarrow \mathcal{C}_n$, such that $\tau(\mathcal{C}_1)$ is equivalent to a segment $(a_i, a_{i+1})$ for some $i \in \{1,\ldots,n\}$.

\section{Cluster Tilting Subcategories}\label{Sec:3}

For the rest of this work, we shall assume all of our subcategories to be full.

Let $\mathcal{T}$ be a triangulated category with suspension functor $\Sigma$, and let $\mathcal{S}$ be a subcategory of $\mathcal{T}$.
Let $f \in \mathrm{Hom}_{\mathcal{T}}(X,Y)$, then, we call $f$ a \textit{right} $\mathcal{S}$-\textit{approximation} of $Y \in \mathcal{T}$ if $X \in \mathcal{S}$ and 
\[
\mathrm{Hom}(-,X) \xrightarrow{f \cdot} \mathrm{Hom}(-,Y) \rightarrow 0
\]
is exact as functors on $\mathcal{S}$.
We call $\mathcal{S}$ a \textit{contravariantly finite subcategory} of $\mathcal{T}$ if any $Y \in \mathcal{T}$ has a right $\mathcal{S}$-approximation.
A \textit{left} $\mathcal{S}$-\textit{approximation} and a \textit{covariantly finite subcategory} are dually defined.
We say that a contravariantly and covariantly finite subcategory is \textit{functorially finite}.

Contravariantly finite subcategories were first introduced in \cite{Auslander1980}, with respect to finitely generated module categories of artin algebras.

Now, let $\mathcal{S}$ be a functorially finite subcategory of $\mathcal{T}$, then we say $\mathcal{S}$ is a \textit{cluster tilting subcategory} of $\mathcal{T}$ if
\[
\mathcal{S}=(\Sigma^{-1}\mathcal{S})^{\perp}={}^{\perp}(\Sigma \mathcal{S})
\]
where
\[
X^{\perp}=\{y \in \mathcal{T} \; \vline \; \mathrm{Hom}(x,y)=0\; \text{for all}\; x \in X\} 
\]
and 
\[
{}^{\perp}X =\{ y \in \mathcal{T}\; \vline \; \mathrm{Hom}(y,x)=0\; \text{for all}\; x \in X\}
\]
for a subcategory $X$ of $\mathcal{T}$.

\subsection{Cluster Tilting Subcategories of \texorpdfstring{$\mathcal{C}_n$}{C(Z)}}

When we are working in a 2-Calabi-Yau category, it is clear that $(\Sigma^{-1}\mathcal{S})^{\perp}={}^{\perp}(\Sigma \mathcal{S})$, and so we only need to show that $\mathcal{S}={}^{\perp}(\Sigma \mathcal{S})$ for $\mathcal{S}$ to be a cluster tilting subcategory.
The cluster tilting subcategories of $\mathcal{C}_n$ were classified by Gratz-Holm-J\o rgensen in \cite{Gratz2017} for any $n \geq 1$.

We denote by $\operatorname{add}(\mathscr{X})$ the additive category for some collection of objects $\mathscr{X} \subset \operatorname{Ob}(\mathcal{C}_n)$.

\begin{theorem}\cite[Theorem 5.7]{Gratz2017} \label{Thm:GHJ}
Let $\mathscr{X}$ be a set of arcs of $\mathscr{Z}$, and let $\mathscr{X}$ be the set of objects in $\mathcal{C}_n$ corresponding to the set $\mathscr{X}$.
Then $\operatorname{add} (\mathscr{X})$ is a cluster tilting subcategory of $\mathcal{C}_n$ if and only if $\mathscr{X}$ is a maximal set of pairwise non-crossing arcs, such that for each $a \in L(\mathscr{Z})$, the set $\mathscr{X}$ has a fountain or leapfrog converging to $a$.
\end{theorem}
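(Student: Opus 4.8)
The plan is to prove both implications through the intrinsic characterisation of cluster tilting subcategories, exploiting the combinatorial dictionary between $\Ext^1$ and crossings. Since $\mathcal{C}_n$ is $2$-Calabi--Yau, $\operatorname{add}(\mathscr{X})$ is cluster tilting precisely when it is functorially finite and $\operatorname{add}(\mathscr{X}) = {}^{\perp}(\Sigma\,\operatorname{add}(\mathscr{X}))$. The first thing I would record is the translation of the perpendicularity condition: for objects $A,B$ corresponding to arcs, $\Hom(A,\Sigma B) \cong \Ext^1(A,B)$ equals $k$ when the arcs cross and $0$ otherwise, so ${}^{\perp}(\Sigma\,\operatorname{add}(\mathscr{X}))$ is exactly the additive subcategory generated by the arcs crossing no member of $\mathscr{X}$. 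Consequently the identity $\operatorname{add}(\mathscr{X}) = {}^{\perp}(\Sigma\,\operatorname{add}(\mathscr{X}))$ holds if and only if $\mathscr{X}$ is a maximal set of pairwise non-crossing arcs: the inclusion $\operatorname{add}(\mathscr{X}) \subseteq {}^{\perp}(\Sigma\,\operatorname{add}(\mathscr{X}))$ is the non-crossing condition, while the reverse inclusion is maximality. This disposes of the ``maximal non-crossing'' half of the statement and isolates the real content, namely that for maximal non-crossing $\mathscr{X}$, functorial finiteness of $\operatorname{add}(\mathscr{X})$ is equivalent to the existence of a fountain or leapfrog at each $a \in L(\mathscr{Z})$.

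For the sufficiency direction I would construct right and left $\operatorname{add}(\mathscr{X})$-approximations explicitly. Given an arbitrary arc $Y = \{y_0,y_1\}$, the arcs of $\mathscr{X}$ nearest to each endpoint of $Y$ should assemble, via the two triangles induced by crossing arcs, into a candidate approximation; the crucial point is that this candidate is a \emph{finite} direct sum. Because $\mathcal{C}_n$ is $\Hom$-finite and Krull--Schmidt, approximations are forced to be finite, so everything hinges on showing that only finitely many arcs of $\mathscr{X}$ are relevant to $Y$. Away from the accumulation points this is immediate from discreteness; near an accumulation point $a$ it is exactly the fountain/leapfrog hypothesis that furnishes an outermost relevant arc and thereby truncates the otherwise infinite family of arcs converging to $a$. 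I would treat the fountain case (all convergent arcs pinned at a common vertex $z$, approaching $a$ from both sides) and the leapfrog case (arcs straddling and shrinking to $a$) separately, in each case pinning down the finite set of arcs through which any morphism from $\operatorname{add}(\mathscr{X})$ into $Y$ must factor and verifying exactness of $\Hom(-,X') \to \Hom(-,Y) \to 0$ on $\operatorname{add}(\mathscr{X})$. Since distinct accumulation points can be analysed independently and each segment $(a_i,a_{i+1})$ behaves like the one-accumulation-point situation through the embedding $\tau\colon \mathcal{C}_1 \hookrightarrow \mathcal{C}_n$, it suffices to carry out this local analysis once.

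For the necessity direction I would argue contrapositively: assuming $\mathscr{X}$ is maximal non-crossing but has neither a fountain nor a leapfrog at some $a \in L(\mathscr{Z})$, I would exhibit an object with no right (or left) approximation. The absence of both patterns means the arcs of $\mathscr{X}$ accumulating at $a$ do so one-sidedly, without a common basepoint and without straddling, so there is no outermost arc bounding a neighbourhood of $a$ on the missing side. Choosing an arc $Y$ with one endpoint sitting in that neighbourhood, any $\operatorname{add}(\mathscr{X})$-approximation of $Y$ would require a summand for each of the infinitely many arcs of $\mathscr{X}$ creeping past that endpoint towards $a$, contradicting the finiteness of direct sums in $\operatorname{add}(\mathscr{X})$.

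The main obstacle I anticipate is the sufficiency direction: making the explicit approximation construction precise and checking exactness on all of $\operatorname{add}(\mathscr{X})$, together with the limiting analysis showing that the fountain/leapfrog hypothesis genuinely collapses the infinite family of nearby arcs to a finite approximation. The combinatorics of which arcs bound $Y$ near an accumulation point, and the bookkeeping of the two induced triangles, is where the argument is most delicate.
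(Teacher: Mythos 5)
First, note that the paper does not prove this statement at all: it is quoted verbatim from \cite{Gratz2017} as Theorem 5.7, so there is no in-paper argument to compare yours against; I can only measure your proposal against the strategy of the cited source. Your overall decomposition is the right one and matches that source: in a $2$-Calabi--Yau, $\Hom$-finite, Krull--Schmidt category the condition $\operatorname{add}(\mathscr{X})={}^{\perp}(\Sigma\operatorname{add}(\mathscr{X}))$ translates, via the crossing/$\Ext^1$ dictionary, exactly into ``maximal set of pairwise non-crossing arcs'', and the residual content is that functorial finiteness is equivalent to the fountain-or-leapfrog condition at each $a\in L(\mathscr{Z})$. Your sufficiency sketch points in the right direction, though you underestimate the fountain case: an arc $Y$ straddling $a$ crosses \emph{infinitely} many arcs of a fountain at $z$, so finiteness of the approximation is not obtained by ``truncating'' the family of relevant arcs to a finite one, but by showing that all of those infinitely many nonzero maps factor through finitely many of them.

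The genuine gap is in your necessity argument, in two places. First, your stated obstruction --- that an approximation of $Y$ ``would require a summand for each of the infinitely many arcs of $\mathscr{X}$ creeping past that endpoint'' --- is not how approximations work: a right $\operatorname{add}(\mathscr{X})$-approximation need not contain a summand for every object of $\mathscr{X}$ mapping nontrivially to $Y$, only an object through which all such maps factor. Since a fountain also has infinitely many arcs mapping nontrivially to a suitable $Y$ and nevertheless \emph{is} precovering, your argument taken literally would disprove the theorem; the failure in the bad case is a factorization failure, not a cardinality failure. The statement actually needed (and used in \cite{Gratz2017}, going back to Holm--J{\o}rgensen's torsion-pair analysis) is that $\operatorname{add}(\mathscr{X})$ is precovering iff every right fountain of $\mathscr{X}$ is a fountain, and preenveloping iff every left fountain is a fountain. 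Second, you have not established the classification that makes the contrapositive run: for a \emph{maximal} non-crossing set one must show that the only possible limit configurations at $a$ are a leapfrog, a fountain, or a right fountain at some $z$ together with a left fountain at some $z'\neq z$, and that this last ``split fountain'' is precisely the case in which approximations fail. Your description of the bad case (one-sided accumulation, no common basepoint, no straddling) does not match this --- maximality together with the two-sided limit condition forces arcs to accumulate at $a$ from both sides --- and without that case analysis the contrapositive has nothing concrete to bite on.
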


In Section \ref{Sec:4}, we will need to chose a cluster tilting subcategory of $\mathcal{C}_n$ to work with, for each $n \geq 1$.
For some technical reasons, it makes more sense for us to consider a cluster tilting subcategory with a leapfrog converging to each accumulation point rather than a fountain.
With this in mind, from now on we shall fix a cluster tilting subcategory of $\mathcal{C}_n$.

\begin{construction}
Given we have $n$ accumulation points in $\mathscr{M}$ then we have $L(\mathscr{M})=\{a_1,\ldots,a_n\}$.
Let $\{z_1, \ldots, z_n\} \subset \mathscr{Z}$ be a set of marked points such that $z_i \in (a_i,a_{i+1})$ for each segment of $S^1$.
By taking the arcs $\{z_i, z_{i+1}\}$ for all $i = 1,\ldots, n$ we inscribe an $n$-gon inside the circle; the arc $\{z_i,z_{i+1}\}$ shall correspond to the object $Z_i \in \mathcal{C}_n$ for all $i=1, \ldots, n$.
For all $a_i \in L(\mathscr{Z})$, let $\{z_i,z_{i+1}\}$ be the first arc in a leapfrog converging to $a_{i+1}$, such that if the arc $\{x,y\}$ is in the leapfrog, then so are either the arcs $\{x^+,y\}$ and $\{x,y^+\}$, or the arcs $\{x^-,y\}$ and $\{x,y^-\}$.
We shall label the leapfrog converging to $a_i$ by $\mathscr{L}_i$.
Without loss of generality, we may let $\{z_i, z_{i+1}^-\}$ be in $\mathscr{L}_{i+1}$, which has a corresponding object labelled $Y_i$.
Notice that this implies that for every arc $M \in \mathscr{L}_{i+1}$ that is not $Z_i$, there are exactly two other objects, $M^-,M^+ \in \mathscr{L}_{i+1}$ that share an endpoint with $M$.

For $n \geq 4$, we give the inscribed $n$-gon a fan triangulation; that is, we add the arcs $\{z_1,z_i\}$ for all $i=2,\ldots, n$.
We shall label the object correspond to the arc $\{z_1,z_i\}$ as $X_i$, for all $i=2,\ldots,n$.
Notice that this means that we have $X_2 \cong Z_1$ and $X_n \cong Z_n$.
This collection of arcs will look like Figure \ref{fig:triangulation}, along with a leapfrog $\mathscr{L}_i$ converging to each accumulation point.
\end{construction}

\begin{figure}[h!]
\centering
\begin{tikzpicture}
\draw ({4*sin(350)},{4*cos(350)}) arc (100:440:4);
\draw[dotted] ({4*sin(10)},{4*cos(10)}) arc (80:100:4);
\draw[fill=white] ({4*sin(72)},{4*cos(72)}) circle (0.1cm);
\draw[fill=white] ({4*sin(144)},{4*cos(144)}) circle (0.1cm);
\draw[fill=white] ({4*sin(216)},{4*cos(216)}) circle (0.1cm);
\draw[fill=white] ({4*sin(288)},{4*cos(288)}) circle (0.1cm);
\draw ({4*sin(180)},{4*cos(180)}) .. controls(2,-2).. ({4*sin(98)},{4*cos(98)});
\draw ({4*sin(180)},{4*cos(180)}) -- ({4*sin(36)},{4*cos(36)});
\draw ({4*sin(180)},{4*cos(180)}) .. controls(-2,-2).. ({4*sin(252)},{4*cos(252)});
\draw ({4*sin(180)},{4*cos(180)}) -- ({4*sin(324)},{4*cos(324)});
\draw ({4*sin(180)},{4*cos(180)}) .. controls(2.2,-2.2).. ({4*sin(108)},{4*cos(108)});
\draw ({4*sin(190)},{4*cos(190)}) .. controls(-2.2,-2.2).. ({4*sin(252)},{4*cos(252)});
\draw ({4*sin(98)},{4*cos(98)}) .. controls(2.5,1.2).. ({4*sin(36)},{4*cos(36)});
\draw ({4*sin(98)},{4*cos(98)}) .. controls(2.8,1.4).. ({4*sin(46)},{4*cos(46)});
\draw ({4*sin(324)},{4*cos(324)}) .. controls(-2.2,1.4).. ({4*sin(252)},{4*cos(252)});
\draw ({4*sin(324)},{4*cos(324)}) .. controls(-2.5,1.6).. ({4*sin(262)},{4*cos(262)});
\draw[dashed] ({3.4*sin(288)},{3.4*cos(288)}) -- ({3.75*sin(288)},{3.75*cos(288)});
\draw[dashed] ({3.4*sin(216)},{3.4*cos(216)}) -- ({3.75*sin(216)},{3.75*cos(216)});
\draw[dashed] ({3.4*sin(144)},{3.4*cos(144)}) -- ({3.75*sin(144)},{3.75*cos(144)});
\draw[dashed] ({3.4*sin(72)},{3.4*cos(72)}) -- ({3.75*sin(72)},{3.75*cos(72)});
\node at (2,-1.7) {\footnotesize $Z_1$};
\node at (2.7,-2.3) {\footnotesize $Y_1$};
\node at (1,0) {\footnotesize $X_3$};
\node at (-0.8,0.1) {\footnotesize $X_{n-1}$};
\node at (-1.8,-2) {\footnotesize $Z_n$};
\node at (2.7,0.7) {\footnotesize $Z_2$};
\node at (3.1,1.5) {\footnotesize $Y_2$};
\node at (-2.4,0.2) {\footnotesize $Z_{n-1}$};
\node at (-3,1.6) {\footnotesize $Y_{n-1}$};
\node at (-2.4,-2.4) {\footnotesize $Y_n$};
\end{tikzpicture}
\caption{The collection of arcs corresponding to the cluster tilting subcategory we wish to consider for $n$ two-sided accumulation points, where leapfrogs are represented by the dashed lines. By Theorem \ref{Thm:GHJ}, this collection of leapfrogs together with the fan triangulation of the inscribed $n$-gon corresponds a cluster tilting subcategory of $\mathcal{C}_n$.}
\label{fig:triangulation}
\end{figure}
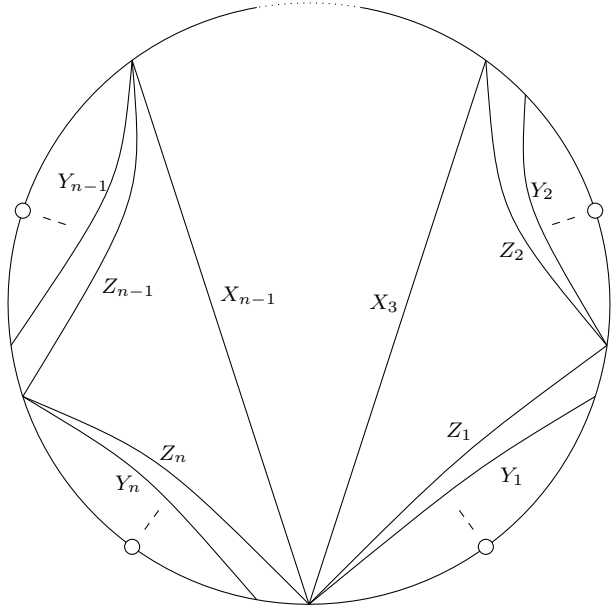

\subsection{Exchange Pairs}

Exchange pairs in a cluster category were introduced in \cite{BMRRT} as an extension of the notion of the completion of an almost complete basic tilting module over a hereditary algebra, to the theory of cluster categories and almost cluster tilting objects.
They also provide us with an analogue of the idea of mutation of clusters in a cluster algebra, and are hence useful to understand when looking at cluster categories generally.
More specifically, we wish to understand them as they play a crucial role in a result in \cite{Palu2008} which we will use in the proof of Theorem \ref{Thm:TriGroGroupCn}.

\begin{definition}
Let $\mathcal{T}$ be a cluster tilting subcategory of $\mathcal{C}_n$ with an infinite collection of indecomposable objects $\{T_i\}_{i \in I}$.
Let $\mathcal{T}' \subset \mathcal{T}$ be the full additive subcategory containing all but one $\{T_i\}_{i \in I}$, i.e. there exists exactly one indecomposable object $T \in \mathcal{T}$ such that $T \notin \mathcal{T}'$.
Then we call $\mathcal{T}'$ an \textit{almost cluster tilting subcategory}, and $T$ a \textit{complement} of $\mathcal{T}'$.
\end{definition}

By \cite{Holm2012} there are exactly two complements for each almost cluster tilting subcategory $\mathcal{T}'$, which we label $M$ and $M^{\ast}$ respectively.
We call the pair $(M,M^{\ast})$ an \textit{exchange pair}.

For every exchange pair $(M,M^{\ast})$ we have two non-split triangles, called the \textit{exchange triangles}:
\[
M \rightarrow B_{M^{\ast}} \rightarrow M^{\ast} \rightarrow \Sigma M \quad \text{and} \quad M^{\ast} \rightarrow B_{M} \rightarrow M \rightarrow \Sigma M^{\ast}
\]
where the maps $B_M \rightarrow M$ and $B_{M^{\ast}} \rightarrow M^{\ast}$ are right $\mathcal{T}$-approximations, where $\mathcal{T}$ is the cluster tilting subcategory containing both the almost cluster tilting subcategory $\mathcal{T}'$ and its complement $M$.

Notice that this a way of constructing a new cluster tilting subcategory of $\mathcal{C}_n$ from a given one, and is akin to a mutation of a seed in a cluster algebra.

Given some cluster tilting subcategory $\mathcal{T}$, with an exchange pair $(M, M^{\ast})$ and $M \in \mathcal{T}$, then by \cite{Gratz2017} the arc corresponding to $M^{\ast}$ must cross an arc corresponding to some indecomposable object $X \in \mathcal{T}$, as $\mathcal{T}$ corresponds to a maximal set of pairwise non-crossing arcs.
However, there exists a cluster tilting subcategory $\mathcal{S}$ with $M^{\ast} \in \mathcal{S}$ such that $\mathcal{S}$ and $\mathcal{T}$ share an almost cluster tilting subcategory $\mathcal{T}'$, which has complements $M$ and $M^{\ast}$.
This implies that the arc corresponding to $M^{\ast}$ must cross that corresponding to $M$.
Therefore, we know that $\operatorname{Ext}^1 (M,M^{\ast}) \cong k$, and therefore $\operatorname{dim}_k \operatorname{Hom}_{\mathcal{C}_n} (M, \Sigma M^{\ast})=1$.
By the symmetry of $\operatorname{Ext}^1(-,-)$ inherent in 2-Calabi-Yau triangulated categories, we also have $\operatorname{dim}_k \operatorname{Hom}_{\mathcal{C}_n} (M^{\ast}, \Sigma M)=1$.

It is important to note that by definition $B_M,B_{M^{\ast}} \in \mathcal{T}$, and the (potentially direct sum of) arcs they correspond to are exactly the arcs which form a quadrilateral with the arcs corresponding to $M$ and $M^{\ast}$.

\section{Grothendieck Group of \texorpdfstring{$\mathcal{C}_n$}{C(Z)}}\label{Sec:4}

In this section, we start by laying out the basic definitions of Grothendieck groups, alongside some results that will be of use to us in our computations of the Grothendieck group of $\mathcal{C}_n$.
Principally in this section, the main result which we use is a theorem due to Palu \cite{Palu2008}, which finds an isomorphism between the Grothendieck group of a Hom-finite, 2-Calabi-Yau triangulated category, with cluster tilting subcategories, and a quotient of the split Grothendieck group of a cluster tilting subcategory.

Then we move on to one of the main goals of this work, computing the Grothendieck group for each $\mathcal{C}_n$; unfortunately due to the nature of the proof in general, this will need to be considered in a slightly altered method for $n=1,2,3$, until it can be shown in generality for $n \geq 4$.
This is due to the general proof using a cluster tilting subcategory of $\mathcal{C}_n$ which corresponds to an inscribed $n$-gon of arcs in $S^1$, and using a triangulation of said $n$-gon to garner relations between the objects corresponding to the arcs on the edge of the $n$-gon.
As there is no triangulation of a $n$-gon for $n=1,2,3$, this necessitates a slightly altered proof in each case, although the general setup remains the same.

\subsection{The Grothendieck Group}

The Grothendieck group of a category is a way of constructing an abelian group on the set of objects in the category in a universal way.
This is a generalisation of the Grothendieck group over a commutative monoid, and obeys the same universal property as found in the commutative monoid setting.

The precise definition of a Grothendieck group depends on the structure that we are considering on our category; however all of these definitions follow the idea of taking the free group with basis the isoclasses of objects of the category, then taking the quotient by a set of relations $[Y]=[X]+[Z]$ for all triples $(X,Y,Z)$ satisfying a given condition.
For our purposes we only need give the definitions of a split and a triangulated Grothendieck group.

\begin{definition}
Let $\mathcal{T}$ be any triangulated category.
We denote by $G_0(\mathcal{T})$ the free abelian group with basis $\{[X]\; \vline \; X \in \mathcal{T}\}$, with $[X]$ corresponding to the isoclass of $X \in \mathcal{T}$.
Then the \textit{Grothendieck group of} $\mathcal{T}$, $K_0^{\operatorname{add}}(\mathcal{T})$, is defined to be the group
\[
K_0^{\operatorname{add}}(\mathcal{T})= G_0(\mathcal{T})/([Y]-[X]-[Z]\; \vline \; \text{such that} \; X \rightarrow Y \rightarrow Z \rightarrow \Sigma X \; \text{is a triangle in} \mathcal{T})
\]
\end{definition}
\begin{definition}
Let $\mathcal{S}$ be any additive category.
We denote by $G_0(\mathcal{S})$ the free abelian group with basis $\{[X]\}$, with $[X]$ corresponding to the isoclass of $X \in \mathcal{S}$.
Then we define the \textit{split Grothendieck group of} $\mathcal{S}$, $K_0^{\operatorname{add}}(\mathcal{S})$, to be
\[
K_0^{\operatorname{add}}(\mathcal{S})= G_0(\mathcal{S})/([X \oplus Y]-[X]-[Y]\; \vline \;X,\;Y \in \mathcal{S})
\]
\end{definition}

When working with the split Grothendieck group, we will want to use the following well-known result:

\begin{proposition} \label{Prop:SplitGroth} \cite[Proposition 2.1]{Naisse2019}   
Let $\mathcal{T}$ be a Krull-Schmidt category with a collection of non-isomorphic indecomposable objects $\{X_i\}_{i \in I}$.
The split Grothendieck group of $\mathcal{T}$ is the free abelian group 
\[
K_0^{add}(\mathcal{T}) \cong \bigoplus_{i \in I}\mathbb{Z} \cdot [X_i]
\]
with basis $\{[X_i]\}_{i \in I}$.
\end{proposition}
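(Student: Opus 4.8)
The plan is to construct an explicit isomorphism between $K_0^{\operatorname{add}}(\mathcal{T})$ and the free abelian group $F := \bigoplus_{i \in I} \mathbb{Z} \cdot [X_i]$, exploiting the fact that the Krull-Schmidt property furnishes every object with a decomposition into indecomposables that is unique up to isomorphism and reordering. I would build a homomorphism each way and check they are mutually inverse.

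First I would define a homomorphism on the free group $G_0(\mathcal{T})$. Given an object $X$, the Krull-Schmidt property guarantees $X \cong \bigoplus_{i \in I} X_i^{\oplus n_i}$ for uniquely determined non-negative integers $n_i$, almost all zero. I set $\Phi([X]) = \sum_{i \in I} n_i [X_i] \in F$ and extend $\mathbb{Z}$-linearly to all of $G_0(\mathcal{T})$. The uniqueness clause of Krull-Schmidt is precisely what makes the integers $n_i$, and hence $\Phi$, well-defined; the existence clause guarantees that every basis element of $G_0(\mathcal{T})$ is in the domain.

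Next I would check that $\Phi$ annihilates the defining relations of the split group. If $X \cong \bigoplus_i X_i^{\oplus n_i}$ and $Y \cong \bigoplus_i X_i^{\oplus m_i}$, then $X \oplus Y \cong \bigoplus_i X_i^{\oplus (n_i + m_i)}$, whence $\Phi([X \oplus Y]) = \sum_i (n_i + m_i)[X_i] = \Phi([X]) + \Phi([Y])$. Thus $\Phi$ sends every generator $[X \oplus Y] - [X] - [Y]$ to zero and so descends to a homomorphism $\phi \colon K_0^{\operatorname{add}}(\mathcal{T}) \to F$. In the other direction, since $F$ is free on $\{[X_i]\}_{i \in I}$, there is a unique homomorphism $\psi \colon F \to K_0^{\operatorname{add}}(\mathcal{T})$ determined by $\psi([X_i]) = [X_i]$, the class of $X_i$ in the quotient.

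Finally I would verify that $\phi$ and $\psi$ are mutually inverse. On the free generators $[X_i]$ the composite $\phi \circ \psi$ is the identity, because each $X_i$ is indecomposable and so its Krull-Schmidt decomposition is itself, giving $\phi([X_i]) = [X_i]$. For $\psi \circ \phi$, repeated application of the split relation yields $[X] = \sum_i n_i [X_i]$ in $K_0^{\operatorname{add}}(\mathcal{T})$ whenever $X \cong \bigoplus_i X_i^{\oplus n_i}$ (using that isomorphic objects have equal classes); hence $\psi(\phi([X])) = \sum_i n_i [X_i] = [X]$, and $\psi \circ \phi = \mathrm{id}$. The only genuinely subtle point is the well-definedness of $\Phi$, which is exactly where the Krull-Schmidt hypothesis is indispensable—without uniqueness of decomposition the coefficients $n_i$ would be ambiguous. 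Everything else amounts to routine bookkeeping with the defining relations.
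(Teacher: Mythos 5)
Your argument is correct and complete: the two homomorphisms are well defined (with the Krull--Schmidt uniqueness of decompositions doing exactly the work you identify) and are mutually inverse, so the isomorphism follows. The paper itself offers no proof of this proposition---it is quoted directly from \cite[Proposition 2.1]{Naisse2019}---and your write-up is the standard argument one would find there, so there is nothing to compare beyond noting that you have supplied the details the paper omits.
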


We are now able to state a theorem by Palu, which will be of use in the proof of Theorem \ref{Thm:TriGroGroupCn}.
\begin{theorem}\cite[Theorem 10]{Palu2008}\label{Thm:Palu}
Let $\mathcal{C}$ be a $\operatorname{Hom}$-finite, 2-Calabi-Yau triangulated category with a cluster tilting subcategory $\mathcal{T}$, and let $M \in \operatorname{ind}(\mathcal{T})$.
Let $B_M$ and $B_{M^{\ast}}$ be the central objects in the exchange triangles for the exchange pair $(M,M^{\ast})$.
Then, if $K_0^{\operatorname{add}}(\mathcal{T})$ is generated by all classes $[S_M]$ of simple $\mathcal{T}$-modules, the triangulated Grothendieck group of $\mathcal{C}$ is the quotient of the split Grothendieck group of the cluster tilting subcategory $\mathcal{T}$ by all relations $[B_{M^{\ast}}]- [B_M]$:
\[
K_0(\mathcal{C}) \cong K_0^{\operatorname{add}}(\mathcal{T}) / \langle ([B_{M^{\ast}}]- [B_M]) \; \vline \; M \in \operatorname{ind}(\mathcal{T}) \rangle  
\]
\end{theorem}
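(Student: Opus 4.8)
The plan is to exhibit a surjection $\phi\colon K_0^{\operatorname{add}}(\mathcal{T})\to K_0(\mathcal{C})$ induced by the inclusion $\mathcal{T}\hookrightarrow\mathcal{C}$, to check that the exchange relations lie in its kernel, and then to certify that they generate the kernel by building an explicit inverse out of the index with respect to $\mathcal{T}$. Concretely, I aim to establish exactness of
\[
K_0(\operatorname{mod}\mathcal{T})\xrightarrow{\ \chi\ }K_0^{\operatorname{add}}(\mathcal{T})\xrightarrow{\ \phi\ }K_0(\mathcal{C})\longrightarrow 0,
\qquad \operatorname{im}\chi=\langle [B_{M^{\ast}}]-[B_M]\rangle .
\]
First I would set up $\phi$: a split relation $[X\oplus Y]=[X]+[Y]$ is realised in $\mathcal{C}$ by the split triangle $X\to X\oplus Y\to Y\to\Sigma X$, so the inclusion induces a homomorphism $\phi([T])=[T]$. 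Since $\mathcal{T}$ is cluster tilting, every $X\in\mathcal{C}$ fits in a triangle $T_1\to T_0\to X\to\Sigma T_1$ with $T_0,T_1\in\mathcal{T}$, giving $[X]=[T_0]-[T_1]=\phi([T_0]-[T_1])$, so $\phi$ is surjective. Applying $\phi$ to the two exchange triangles yields $[B_{M^{\ast}}]=[M]+[M^{\ast}]=[B_M]$ in $K_0(\mathcal{C})$, so each $[B_{M^{\ast}}]-[B_M]$ lies in $\ker\phi$ and $\phi$ factors through a surjection $\bar\phi\colon K_0^{\operatorname{add}}(\mathcal{T})/\langle [B_{M^{\ast}}]-[B_M]\rangle\to K_0(\mathcal{C})$. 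It remains to prove $\bar\phi$ is injective.

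For the inverse I would use the index. For $X\in\mathcal{C}$, a triangle $T_1\to T_0\to X\to\Sigma T_1$ as above defines $\operatorname{ind}(X):=[T_0]-[T_1]\in K_0^{\operatorname{add}}(\mathcal{T})$; this is independent of the chosen triangle and additive on direct sums. The decisive computation is on an exchange pair $(M,M^{\ast})$ with $M\in\mathcal{T}$: the first exchange triangle $M\to B_{M^{\ast}}\to M^{\ast}\to\Sigma M$ is already of the defining shape, so $\operatorname{ind}(M^{\ast})=[B_{M^{\ast}}]-[M]$, while $\operatorname{ind}(M)=[M]$. Feeding these into the second exchange triangle $M^{\ast}\to B_M\to M\to\Sigma M^{\ast}$ shows that its \emph{index defect}
\[
\operatorname{ind}(M^{\ast})-\operatorname{ind}(B_M)+\operatorname{ind}(M)=\bigl([B_{M^{\ast}}]-[M]\bigr)-[B_M]+[M]=[B_{M^{\ast}}]-[B_M]
\]
is exactly the relation we divide by. Thus the subgroup we quotient by is precisely the span of the defects of the exchange triangles.

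The heart of the argument is the general index-defect formula: for an arbitrary triangle $X\to Y\to Z\to\Sigma X$, the element $\operatorname{ind}(X)-\operatorname{ind}(Y)+\operatorname{ind}(Z)$ equals $\chi$ of the class in $K_0(\operatorname{mod}\mathcal{T})$ of a single module determined functorially by the triangle (obtained by applying $F=\Hom_{\mathcal{C}}(\mathcal{T},-)$). I would then identify $\chi[S_M]=[B_{M^{\ast}}]-[B_M]$: the minimal projective presentation of the simple $S_M$ in $\operatorname{mod}\mathcal{T}$ is read off from the exchange triangles, using the rigidity identity $\Hom_{\mathcal{C}}(\mathcal{T},\Sigma\mathcal{T})=\Ext^1(\mathcal{T},\mathcal{T})=0$ to see that the relevant approximation terms are $B_M$ and $B_{M^{\ast}}$. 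Invoking the hypothesis that $K_0(\operatorname{mod}\mathcal{T})$ is generated by the simple classes $[S_M]$, it follows that $\operatorname{im}\chi$, and hence every index defect, lies in $\langle [B_{M^{\ast}}]-[B_M]\rangle$. Therefore $[X]\mapsto\operatorname{ind}(X)\bmod\langle [B_{M^{\ast}}]-[B_M]\rangle$ respects the Euler relations and defines a homomorphism $\psi\colon K_0(\mathcal{C})\to K_0^{\operatorname{add}}(\mathcal{T})/\langle [B_{M^{\ast}}]-[B_M]\rangle$.

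Finally I would verify that $\psi$ and $\bar\phi$ are mutually inverse: $\bar\phi\psi([X])=[T_0]-[T_1]=[X]$ from the defining triangle, while $\psi\bar\phi([T])=\operatorname{ind}(T)=[T]$ on the generators $T\in\mathcal{T}$. This yields the claimed isomorphism. The main obstacle is squarely the third paragraph: proving that the defect of a \emph{general} triangle is governed by a single module class in $K_0(\operatorname{mod}\mathcal{T})$ and that $\chi$ carries the simples to the exchange relations. This is where the hypothesis on $\operatorname{mod}\mathcal{T}$ is essential — without it the defect subgroup could be strictly larger than $\langle [B_{M^{\ast}}]-[B_M]\rangle$ — and it is precisely the point at which the corrected formulation of the hypothesis is needed.
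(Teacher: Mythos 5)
This theorem is not proved in the paper at all: it is quoted from Palu \cite{Palu2008} (with the hypothesis on $K_0^{\operatorname{add}}(\mathcal{T})$ being generated by the simple $\mathcal{T}$-modules added to repair the original statement, as discussed in Remark \ref{Rem:Palu}), so there is no in-paper proof to compare against. Your proposal is, in outline, a faithful reconstruction of Palu's own argument: the surjection $\phi$ induced by inclusion, the observation that the Euler relations applied to the exchange triangles kill $[B_{M^{\ast}}]-[B_M]$, the index $\operatorname{ind}_{\mathcal{T}}$ built from Keller--Reiten triangles $T_1\to T_0\to X\to\Sigma T_1$, and the inverse map $\psi$ whose well-definedness rests on the index-defect formula. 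You also correctly locate the exact point where the added hypothesis is needed: without knowing that $K_0(\operatorname{mod}\mathcal{T})$ is generated by the simples, one only gets that the defect subgroup contains $\langle[B_{M^{\ast}}]-[B_M]\rangle$, and the quotient could be too small --- this is precisely the failure exhibited by the fountain subcategory of $\mathcal{C}_1$ in Remark \ref{Rem:Palu}.

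What your sketch does not supply, and what you rightly flag as the heart of the matter, are three technical inputs: (i) that the index is independent of the chosen triangle and additive (Dehy--Keller); (ii) the defect formula expressing $\operatorname{ind}(X)-\operatorname{ind}(Y)+\operatorname{ind}(Z)$ for an arbitrary triangle as the image under a homomorphism $K_0(\operatorname{mod}\mathcal{T})\to K_0^{\operatorname{add}}(\mathcal{T})$ of a module class functorially attached to the triangle via $F=\Hom_{\mathcal{C}}(\mathcal{T},-)$; and (iii) the computation that this homomorphism sends $[S_M]$ to $[B_{M^{\ast}}]-[B_M]$, which uses that applying $F$ to the exchange triangles yields the minimal projective presentation of $S_M$. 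These are the substantive content of Palu's papers; as a self-contained proof your text is incomplete without them, but as a proof strategy it is the correct one and matches the source the paper relies on.
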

This allows us to compute the Grothendieck group of $\mathcal{C}_n$ as a quotient of the split Grothendieck group of a cluster tilting subcategory $\mathcal{T}$.

It should be noted that this is not the original statement in \cite{Palu2008}, as the condition that $K_0^{\operatorname{add}}(\mathcal{T})$ be generated by the simple modules in $\operatorname{mod} \mathcal{T}$ is implicitly used in the proof, but not explicitly stated as a requirement.
Here $\operatorname{mod} \mathcal{T}$ is used to denoted the category of $\mathcal{T}$-modules, i.e. the category of $k$-linear, contravariant functors from $\mathcal{T}$ to the category of $k$-vector spaces.
See Remark \ref{Rem:Palu} for further details.

\subsection{The Grothendieck Group of \texorpdfstring{$\mathcal{C}_n$}{Cn}}

As $\mathcal{C}_n$ is a Krull-Schmidt category, this implies by Proposition \ref{Prop:SplitGroth}  that $K_0^{\operatorname{add}}(\mathcal{T})$ is exactly the free abelian group generated by the indecomposable elements $X \in \mathcal{T}$.
This leaves us with only the exchange pairs of $\mathcal{T}$ to find, and thus we can compute the Grothendieck group of $\mathcal{C}_n$.
The exchange pairs can be easily computed with the following lemma.

\begin{lemma}\label{Lem:ExPairs}
Let $\mathcal{T}$ be a cluster tilting subcategory of $\mathcal{C}_n$, and let the corresponding collection of arcs be $\mathscr{T}$.
Let $X,Y,W,Z \in \mathcal{T}$ be indecomposable objects or zero objects, such that the corresponding arcs in $\mathscr{T}$ form a quadrilateral.
Let $M \in \operatorname{ind}(\mathcal{T})$ correspond to the arc in $\mathscr{T}$ the triangulates the quadrilateral, i.e.

\[
\begin{tikzpicture}
   \draw [domain=30:60] plot ({2*cos(\x)}, {2*sin(\x)});
   \draw [domain=210:240] plot ({2*cos(\x)}, {2*sin(\x)});
    \draw [domain=300:330] plot ({2*cos(\x)}, {2*sin(\x)});
   \draw [domain=120:150] plot ({2*cos(\x)}, {2*sin(\x)});
   \draw[ultra thin] (1.41,1.41)..controls(1,0)..(1.41,-1.41);
   \draw[thin] (1.41,1.41)..controls(0,1)..(-1.41,1.41);
    \draw[ultra thin] (-1.41,-1.41)..controls(0,-1)..(1.41,-1.41);
   \draw[ultra thin] (-1.41,-1.41)..controls(-1,0)..(-1.41,1.41);
   \draw[ultra thin] (1.41,1.41) -- (-1.41,-1.41);
   \draw[thin, dashed] (1.41,-1.41) -- (-1.41,1.41);
   \node at (1.3,0) {\scriptsize $X$};
   \node at (0,1.3) {\scriptsize $W$};
    \node at (-1.3,0) {\scriptsize $Y$};
   \node at (0,-1.3) {\scriptsize $Z$};
   \node at (0.25,0.6) {\scriptsize $M$};
   \node at (0.25,-0.6) {\scriptsize $M^{\ast}$};
\end{tikzpicture}
\]
with $M^{\ast}$ such that $(M,M^{\ast})$ is an exchange pair.
Then
\[
B_M \cong W \oplus Z, \; B_{M^{\ast}} \cong X \oplus Y
\]
\end{lemma}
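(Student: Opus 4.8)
The plan is to identify the two exchange triangles of the pair $(M,M^{\ast})$ with the pair of triangles that the crossing arcs $M$ and $M^{\ast}$ induce in $\mathcal{C}_n$, as described in Section \ref{Sec:2}, and then simply read off the middle terms. Since $M$ and $M^{\ast}$ are the two diagonals of the quadrilateral, their arcs cross, so applying the crossing-arc construction with $N = M^{\ast}$ yields the two triangles
\[
M \to X \oplus Y \to M^{\ast} \to \Sigma M \qquad \text{and} \qquad M^{\ast} \to W \oplus Z \to M \to \Sigma M^{\ast},
\]
whose middle terms are built from the four sides $X,Y,W,Z$ of the quadrilateral. These have the same outer terms as the exchange triangles $M \to B_{M^{\ast}} \to M^{\ast} \to \Sigma M$ and $M^{\ast} \to B_{M} \to M \to \Sigma M^{\ast}$, so it suffices to prove that a non-split triangle with these prescribed end terms is unique up to isomorphism of its middle object.

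For the uniqueness I would use that, because $(M,M^{\ast})$ is an exchange pair, the discussion preceding the lemma gives $\dim_k \operatorname{Hom}_{\mathcal{C}_n}(M^{\ast}, \Sigma M) = 1$. A triangle with end terms $M$ and $M^{\ast}$ is determined up to isomorphism by its connecting morphism in this one-dimensional space; any two non-zero connecting morphisms differ by a scalar and hence produce isomorphic triangles, so in particular isomorphic middle terms. It then remains only to see that the crossing-arc triangles are non-split. For this I would note that $M^{\ast} \notin \mathcal{T}$, since its arc crosses that of $M \in \mathcal{T}$ while $\mathcal{T}$ is a maximal pairwise non-crossing collection; as $M^{\ast}$ is indecomposable, it cannot be a direct summand of $X \oplus Y$ or of $W \oplus Z$, both of which lie in $\mathcal{T}$ by hypothesis. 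A split triangle would force its middle term to contain $M^{\ast}$ as a summand, so neither triangle can split. Comparing with the exchange triangles, which are non-split by definition, then yields $B_{M^{\ast}} \cong X \oplus Y$ and $B_{M} \cong W \oplus Z$.

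The main obstacle I anticipate is bookkeeping rather than anything conceptual: one must verify that the crossing-arc construction, applied to the specific quadrilateral of the lemma, genuinely places the sides $X,Y$ in the triangle ending in $M^{\ast}$ and $W,Z$ in the triangle ending in $M$, i.e. that the orientation conventions---dictated by the clockwise action of $\Sigma$ on arcs---agree with the labelling in the figure. Once this matching is pinned down, the uniqueness argument closes the proof immediately.
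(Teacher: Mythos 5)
Your proposal is correct and follows essentially the same route as the paper: both identify the two crossing-arc triangles with the two exchange triangles by combining non-splitness with the one-dimensionality of $\operatorname{Hom}_{\mathcal{C}_n}(M^{\ast},\Sigma M)$ and $\operatorname{Hom}_{\mathcal{C}_n}(M,\Sigma M^{\ast})$. The only (immaterial) divergence is in the non-splitness step, where the paper shows $\operatorname{Hom}(M^{\ast},X\oplus Y)=0$ to rule out a retraction, whereas you invoke Krull--Schmidt together with $M^{\ast}\notin\mathcal{T}$; both arguments are valid.
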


\begin{proof}
By definition, we have the two triangles 
\begin{align*}
    M \rightarrow X \oplus Y \rightarrow M^{\ast} \rightarrow \Sigma M\\
    M^{\ast} \rightarrow W \oplus Z \rightarrow M \rightarrow \Sigma M^{\ast}
\end{align*}
However we also have the exchange triangles 
\[
M \rightarrow B_{M^{\ast}} \rightarrow M^{\ast} \rightarrow \Sigma M \quad \text{and} \quad M^{\ast} \rightarrow B_{M} \rightarrow M \rightarrow \Sigma M^{\ast}
\]
By using the fact that $\operatorname{Ext}^1(M,M^{\ast}) \cong \operatorname{Ext}^1(M^{\ast},M) \cong k$, we now only need to show that none of these triangles split.

To split, the triangle
\[
M \xrightarrow{u} X \oplus Y \xrightarrow{v} M^{\ast} \xrightarrow{w} \Sigma M
\]
must have a retraction $v$, however we have
\[
\operatorname{Hom}(M^{\ast}, X \oplus Y) \cong \operatorname{Ext}^1(M^{\ast},\Sigma^{-1} X \oplus \Sigma^{-1} Y) =0
\]
where the final equality holds as neither $\Sigma^{-1} X$ nor $\Sigma^{-1} Y$ cross $M^{\ast}$, and so have trivial $\operatorname{Ext}$-spaces.
Hence there are no maps $v':M^{\ast} \rightarrow X\oplus Y$, and so $v$ cannot be a retract.
Thus the triangle doesn't split.

A similar argument works for the triangle
\[
M^{\ast} \rightarrow W \oplus Z \rightarrow M \rightarrow \Sigma M^{\ast}
\]

The two exchange triangles are non-split by definition.
Hence we have $B_M \cong W \oplus Z$ and $B_{M^{\ast}} \cong X \oplus Y$.
\end{proof}

\begin{theorem}\label{Thm:TriGroGroupCn}
Let $\mathscr{Z}$ be a collection of marked points on the circle with $n$ two-sided accumulation points.
Then the associated cluster category, $\mathcal{C}_n$ has the triangulated Grothendieck group:
\[
K_0(\mathcal{C}_n) \cong \mathbb{Z}^n
\]
\end{theorem}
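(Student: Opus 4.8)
The plan is to apply Palu's theorem (Theorem \ref{Thm:Palu}) to the fixed cluster tilting subcategory $\mathcal{T}$ built in the Construction, namely the fan triangulation of the inscribed $n$-gon together with a leapfrog $\mathscr{L}_{i+1}$ at each accumulation point. By Proposition \ref{Prop:SplitGroth}, the split Grothendieck group $K_0^{\operatorname{add}}(\mathcal{T})$ is the free abelian group on the indecomposables of $\mathcal{T}$: the $n$-gon edges $Z_1,\dots,Z_n$, the fan diagonals $X_3,\dots,X_{n-1}$ (with $X_2\cong Z_1$ and $X_n\cong Z_n$), and the infinitely many higher leapfrog arcs $Y_i=M_1,M_2,\dots$ in each $\mathscr{L}_{i+1}$. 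Theorem \ref{Thm:Palu} then identifies $K_0(\mathcal{C}_n)$ with this free group modulo the relations $[B_{M^{\ast}}]-[B_M]$ as $M$ ranges over the indecomposables. By Lemma \ref{Lem:ExPairs} each such relation is read off a single quadrilateral: if $M$ diagonalises a quadrilateral with sides $X,Y,W,Z$ then the relation is $[X]+[Y]=[W]+[Z]$, i.e. the two pairs of opposite sides have equal class, with any side that is a boundary edge contributing the zero object. So the whole computation reduces to enumerating, for each indecomposable arc, the quadrilateral it diagonalises.

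Next I would sort these relations into three families. First, for an interior leapfrog arc $M_k$ (with $k\ge 1$) the flanking quadrilateral has two boundary edges as one pair of opposite sides and the neighbours $M_{k-1},M_{k+1}$ as the other, giving $[M_{k-1}]+[M_{k+1}]=0$; hence every leapfrog class equals $\pm[Z_i]$ or $\pm[Y_i]$ according to parity, and crucially each such relation merely defines a fresh higher arc and imposes no constraint among the $Z_i,Y_i,X_j$ themselves. Second, mutating a leapfrog base $Z_i$ (for $2\le i\le n-1$) uses the fan triangle on $\{z_1,z_i,z_{i+1}\}$ together with the first leapfrog triangle, yielding $[Y_i]=[X_i]-[X_{i+1}]$; the boundary cases $Z_1\cong X_2$ and $Z_n\cong X_n$ give the analogous relations $[Y_1]=[X_3]-[Z_2]$ and $[Y_n]=[Z_{n-1}]-[X_{n-1}]$. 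Third, mutating an internal fan diagonal $X_j$ (for $3\le j\le n-1$) uses the two adjacent fan triangles and gives the recurrence $[X_{j+1}]=[X_{j-1}]+[Z_j]-[Z_{j-1}]$.

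Finally I would run the elimination. The first family removes all higher leapfrog arcs, and the second removes every $Y_i$, leaving the free generators $X_2,\dots,X_n,Z_2,\dots,Z_{n-1}$, which is $2n-3$ in total. The $n-3$ recurrences of the third family express $X_4,\dots,X_n$ in terms of $X_2,X_3$ and the $Z_j$ and use up exactly $n-3$ relations, leaving $X_2,X_3,Z_2,\dots,Z_{n-1}$, precisely $n$ generators, with no relations remaining; thus the quotient is free of rank $n$ and $K_0(\mathcal{C}_n)\cong\mathbb{Z}^n$. Since this argument needs a genuine triangulation of the $n$-gon, the degenerate cases $n=1,2,3$ would be treated by the same method but with the fan replaced by direct bookkeeping of the (at most two) leapfrogs; for instance when $n=1$ one checks that the single staircase leapfrog yields $[M_1]=0$ together with $[M_{k-1}]+[M_{k+1}]=0$, collapsing everything onto $\mathbb{Z}\cdot[M_0]$.

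I expect the main obstacle to be twofold. First, the boundary mutations, those at $Z_1\cong X_2$, at $Z_n\cong X_n$, and at the leapfrog bases, break the clean interior pattern, and getting their quadrilaterals and hence their relations exactly right, including which sides degenerate to the zero object, is where errors are most likely. Second, one must ensure that the infinitely many leapfrog generators contribute no hidden relations and no torsion; this is precisely what the parity bookkeeping in the first family guarantees, and it is also where the flagged hypothesis of Theorem \ref{Thm:Palu}, that $K_0^{\operatorname{add}}(\mathcal{T})$ be generated by the simple $\mathcal{T}$-modules, must be verified rather than assumed.
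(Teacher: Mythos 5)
Your proposal is correct and follows essentially the same route as the paper: Palu's theorem applied to the same fixed fan-plus-leapfrogs cluster tilting subcategory, with the relations read off quadrilateral-by-quadrilateral via Lemma \ref{Lem:ExPairs}, sorted into the same three families, and eliminated to leave $n$ free generators (with $n\le 3$ handled separately). The only differences are cosmetic: you retain the basis $\{[X_2],[X_3],[Z_2],\dots,[Z_{n-1}]\}$ where the paper retains $\{[Y_1],[X_2],\dots,[X_n]\}$, and your leapfrog relation $[M_{k-1}]+[M_{k+1}]=0$ records the sign that the paper's statement $[M^-]=[M^+]$ glosses over --- neither of which affects the rank count.
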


\begin{proof}
One may see by Lemma \ref{Lem:ExPairs} that each leapfrog $\mathscr{L}_i$ induces two basis elements $[Z_i], [Y_i]$ in $K_0^{\operatorname{add}}(\mathcal{T})/\langle [B_M] - [B_{M^\ast}]\rangle$.
To see this, take some object $M \in \mathscr{L}_{i+1}$, with arcs $M^-,M^+ \in \mathscr{L}_{i+1}$ sharing an endpoint with $M$.
Then one of the pair of objects $B_M$ and $B_{M^\ast}$ coming from the exchange triangles must be trivial, and the other have two indecomposable direct summands, due to the construction of the leapfrog.
From this, and by the relation $[B_M]-[B_{M^{\ast}}]=0$, one may see that we get that we get $[M^-]=[M^+]$ using Lemma \ref{Lem:ExPairs}.
Then, via induction on the arcs in the leapfrog, one may see that every arc $M \in \mathscr{L}_{i+1}$ corresponds to either $[Z_i]$ or $[Y_i]$ in $K_0^{\operatorname{add}}(\mathcal{T})/\langle [B_M] - [B_{M^\ast}]\rangle$.

To prove the claim for all $n>0$, we shall prove it for $n=1,2,3$ respectively, and then for $n \geq 4$.

In the case $n=1$, we only have $a_1 \in L(\mathscr{Z})$.
Thus the first non-trivial arc in $L_1$ is the arc $\{z_1^-,z_1^+\}$, which we label $Z_1$ in lieu of the arc $\{z_1,z_2\}$.
Label the arc $\{z_1^{--},z_1^+\}$ by $Y_1$.
From this, one may see that the arc $Z_1^{\ast}$ has endpoints $\{z_1^{--},z_1\}$.
Using Lemma \ref{Lem:ExPairs} we see that $B_{Z_1} \cong Y_1$ and $B_{Z_1^{\ast}} = 0$, so we have $[B_{Z_1}] = [Y_1]$ and $[B_{Z_1^{\ast}}] = 0$, meaning $[B_{Z_1}]- [B_{Z_1^{\ast}}]=[Y_1]$. There are no more relations in $K_0^{\operatorname{add}}(\mathcal{T})$ left to consider, and so by Theorem \ref{Thm:Palu} \cite{Palu2008};
\[
K_0(\mathcal{C}_1) \cong \mathbb{Z}
\]

In the case of $n=2$, the arc $\{z_1,z_2\}$ is contained in both $L_1$ and $L_2$, and thus we have $Z_1 \cong Z_2$.
One can see from Lemma \ref{Lem:ExPairs} that $B_{Z_1} \cong Y_1 \oplus Y_2$ and $B_{Z_1^\ast} \cong 0$, and thus we have $[B_{Z_1}]-[B_{Z_1^{\ast}}]=[Y_1]+[Y_2]$, which means that when we quotient by $[B_{Z_1}]-[B_{Z_1^{\ast}}]$ we get $[Y_2]=-[Y_1]$.
Meaning we have only two basis elements in $K_0^{\operatorname{add}}(\mathcal{T})/\langle [B_M] - [B_{M^\ast}]\rangle$, and there are no more relations to consider.
Hence, by Theorem \ref{Thm:Palu} \cite{Palu2008};
\[
K_0 (\mathcal{C}_2) \cong \mathbb{Z}^2
\]

When we have $n=3$, we must consider the triangle consisting of sides being the arcs $\{z_i,z_{i+1}\}$, with $i=1,2,3$.
The exchange pair of $Z_1$ consists of $B_{Z_1} \cong Z_2 \oplus Y_1$ and $B_{Z_1^{\ast}} \cong Z_3$, giving us the relation $[Z_2]+[Y_1]=[Z_3]$.
Similarly, via the exchange pairs at $Z_2$ and $Z_3$ respectively, we have the relations,
\begin{align*}
    [Z_3] + [Y_2] = [Z_1]\\
    [Z_1] + [Y_3] = [Z_2]
\end{align*}
Which means that we have three basis elements of $K_0^{\operatorname{add}}(\mathcal{T})/\langle [B_M] - [B_{M^\ast}]\rangle$, which are the elements $\{[Z_1],[Z_2],[Z_3]\}$, with no more relations to consider, and so by Theorem \ref{Thm:Palu};
\[
K_0(\mathcal{C}_3) \cong \mathbb{Z}^3
\]

For $n \geq 4$, we have the additional arcs forming the fan triangulation of the inscribed $n$-gon to consider.
The corresponding objects to these arcs induce the elements $[X_i]$ in $K_0^{\operatorname{add}}(\mathcal{T})$, for all $i=3,\ldots,n-1$.
In these cases we wish to consider the exchange pairs on two different families of objects, $\{Z_i\}_{i=1,\ldots,n}$ and $\{X_j\}_{j = 3,\ldots,n-1}$.

For all $X_i$, $i=3, \ldots, n-1$ using Lemma \ref{Lem:ExPairs}, we have the exchange pairs $B_{X_i} \cong Z_i \oplus X_{i-1}$ and $B_{X_i^{\ast}} \cong X_{i+1} \oplus Z_{i-1}$.
These exchange pairs give us the relations;
\begin{align*}
    [Z_i] + [X_{i-1}] &= [Z_{i-1}] + [X_{i+1}], & \text{for all} \; i=3,\ldots,n-1
\end{align*}
where we use the fact that by definition $X_2 \cong Z_1$ and $X_n \cong Z_n$.
By inspection at the corresponding arc to $Z_i$ for $i=2,\ldots,n-1$, one can see that by Lemma \ref{Lem:ExPairs} we have the exchange pairs $B_{Z_i} \cong X_{i+1} \oplus Y_i$ and $B_{Z_i^{\ast}} \cong X_i$.
This means we have the set of relations
\begin{align*}
    [X_i]&=[X_{i+1}]+[Y_i]
\end{align*}
for $i=2,\ldots,n-1$.

It only remains to check the relations induced by the exchange pairs for $Z_1$ and $Z_n$.
One may easily check that we have the exchange pairs by applying Lemma \ref{Lem:ExPairs};
\begin{align*}
    B_{Z_1} &\cong Z_2 \oplus Y_1 &B_{Z_1^{\ast}} \cong X_3\\
    B_{Z_n} &\cong Y_n \oplus X_{n-1} &B_{Z_n^{\ast}} \cong Z_{n-1}
\end{align*}
These exchange pairs then induce the relations;
\begin{align*}
    [X_3]&=[Z_2]+[Y_1]\\
    [Z_{n-1}]&=[Y_n]+[X_{n-1}]
\end{align*}

This means that all of the relations $\langle B_M - B_{M^{\ast}} \rangle$, for $M$ an indecomposable summand of the cluster tilting object $T$, have been found.
These are
\begin{align}
    [Z_i] &= -[X_{i-1}] + [X_{i+1}] + [Z_{i-1}] & \text{for all} \; i=3,\ldots,n-1\\
    [Y_i]&=-[X_{i+1}]+[X_i] & \text{for all} \; i=2,\ldots,n-1\\
    [Z_2]&=[Y_1]+[X_3]\\
    [Y_n]&=[Z_{n-1}]-[X_{n-1}]
\end{align}
By considering  $(3)$ and then $(1)$ inductively, we find each $[Z_i]$ for $i=2,\ldots,n-1$ in terms of the elements $\{[Y_1], [X_2], \ldots, [X_n]\}$.
We may also find all $[Y_i]$ for $i=2,\ldots,n$ in terms of the same set of elements, by considering $(2)$ and $(4)$.
Note, given we also have $X_2 \cong Z_1$ and $X_n \cong Z_n$, so therefore $[X_2]=[Z_1]$ and $[X_n]=[Z_n]$, meaning we have found all elements in terms of linear combinations of the set of elements $\{[Y_1], [X_2], \ldots, [X_n]\}$.

Given there are no more relations to consider, this means we have a basis for the group $K_0^{\operatorname{add}}(\mathcal{T})/\langle B_M - B_{M^{\ast}} \rangle$;
\[
\{[Y_1],[X_2],\ldots, [X_n]\}
\]
All of these elements in the basis have infinite order, as there exists no relation such that $m[A] = 0$ for any $m \in \mathbb{Z}$ and $A \in \operatorname{ind}(\mathcal{T})$; and so, by Theorem \ref{Thm:Palu}
\[
K_0(\mathcal{C}_n) \cong \mathbb{Z}^n
\]

\end{proof}

\begin{remark}\label{Rem:Palu}
A natural question to ask is why this choice of cluster tilting subcategory is being used in particular.
We choose this cluster tilting subcategory primarily because it satisfies an implicit condition required to apply Theorem \ref{Thm:Palu}, which other cluster tilting subcategories do not necessarily satisfy.
This is that, given a cluster tilting subcategory $\mathcal{T}$, then $K_0^{\operatorname{add}}(\mathcal{T})$ must be generated by the simple $\mathcal{T}$-modules, $S_M$ associated to the indecomposable object $M$.

For example, if we were to take the additive subcategory, $\mathcal{S} \subset \mathcal{C}_1$, with indecomposable objects that correspond to a fountain at $z \in \mathscr{Z}$ converging to $a \in L(\mathscr{Z})$.
This is a cluster tilting subcategory by \cite{Gratz2017} when the corresponding arcs are maximal as a set of pairwise non-crossing arcs.
When we apply Theorem \ref{Thm:Palu} to $\mathcal{S}$ we get the result that $K_0(\mathcal{C}_1) \cong \mathbb{Z}^2$.
We know this cannot be true by considering only the triangles in $\mathcal{C}_1$ that correspond to the crossings of arcs, which shows that $K_0(\mathcal{C}_1)$ must be a quotient of $\mathbb{Z}$.

We also see that $\mathcal{S}$ does not satisfy the condition of $K_0^{\operatorname{add}}(\mathcal{S})$ is generated by simple $\mathcal{S}$-modules.
To do this, we can associate a quiver to a triangulation (see the survey article by Labardini-Fragoso \cite{Labardini13}), and for $\mathcal{S}$ this quiver has infinite length paths between vertices, and so a projective $\mathcal{S}$-module $P$ may not be the sum of finitely many simple $\mathcal{S}$-modules in $K_0^{\operatorname{add}}(\mathcal{S})$.

This is a condition that is mentioned within the proof of Theorem \ref{Thm:Palu} in \cite{Palu2008}, however is not mentioned within the statement of the theorem.
Whilst this does serve as a counterexample to the theorem as it is originally stated, the author still believes that a form of the theorem could still be used on cluster tilting subcategories without this condition, however the image of the map $\varphi$, as defined in \cite{Palu2008}, would have to be explicitly computed.

The author does not currently know of any 2-Calabi-Yau, triangulated category with at least one cluster tilting subcategory, such that there does not exist a cluster tilting subcategory that satisfies the conditions of Theorem \ref{Thm:Palu}.
\end{remark}

\section{A Completion of \texorpdfstring{$\mathcal{C}_n$}{Cn}}\label{Sec:5}

In this section we shall look at the completion of a discrete cluster category; which can be thought of as the discrete cluster category we started with, but with added extra "limit arcs".
These limit arcs can be thought of as the limit of a right fountain at some marked point $z$ to some accumulation point $a \in L(\mathscr{Z})$.
In the one accumulation point case, this is equivalent to formally adding certain homotopy colimits into the category.

\subsection{Completed Discrete Cluster Categories}

The completed discrete cluster categories of Dynkin type $A_{\infty}$ are a family of categories that are $\mathrm{Hom}$-finite, $k$-linear, Krull-Schmidt categories, corresponding to formally adding limit arcs to $\mathcal{C}(\mathscr{Z})$.
Crucially, they do not inherit the property of being 2-Calabi-Yau from $\mathcal{C}_n$, \cite{Paquette2020}, and so we cannot apply the theorem by Palu \cite{Palu2008} that we have used in the previous section.
We shall label the completed discrete cluster categories as $\overline{\mathcal{C}}_n$.

Both \cite{Paquette2020} and \cite{Fisher2014} introduce a completion of $\mathcal{C}_n$ via different approaches, however this currently only extends to a single accumulation point for Fisher's approach.
In \cite{Paquette2020}, Paquette and Yildirim construct $\overline{\mathcal{C}}_n$ by 'opening up' the accumulation points in the representation of $\mathcal{C}_n$ and adding arcs into them, before taking a localisation on a chosen set of morphisms $\Omega$.
This process is essentially the same as taking the corresponding inclusion functor of $\mathcal{C}_n$ into $\mathcal{C}_{2n}$, before taking the same localisation, which we will call $\pi : \mathcal{C}_{2n} \rightarrow \overline{\mathcal{C}}_n$.

Here we provide a more formal version of the construction.
For the full details and background, refer to \cite[Section 3]{Paquette2020}.
\begin{construction}\label{Cons:Cbar}
Let $S$ be the disc and we fix a collection of marked points $\mathscr{Z}$ on the boundary of $S$, such that there is a non-zero, finite set of accumulation points $L(\mathscr{Z})$.
Label the corresponding category $\mathcal{C}$.
We replace each $z_i \in L(\mathscr{Z})$ with an interval $[z_i^-,z_i^+]$ containing the points $z_{ij} \in (z_i^-,z_i^+)$ for $j \in \mathbb{Z}$ such that $z_{ij} < z_{ij'}$ if and only if $j < j'$.
We also set $\displaystyle \lim_{j \to +\infty} z_{ij} = z_i^+$ and $\displaystyle \lim_{j \to -\infty} z_{ij} = z_i^-$, as well as $\mathscr{Z}'=\mathscr{Z} \cup \{z_{ij}\; \vline \; j \in \mathbb{Z}, z_i \in L(\mathscr{Z})\}$.
Label the new category corresponding to this marked surface $\mathcal{C}'$.

Let $\mathcal{D}$ be the full additive subcategory of $\mathcal{C}'$ with indecomposable objects corresponding to arcs with both end points in the same interval $(z_i^-,z_i^+)$ for some $i$.
We choose a set of morphisms $\Omega$, by saying $f \in \Omega$ if there exists a triangle $X \xrightarrow{f} Y \rightarrow Z \rightarrow \Sigma X$ such that $Z \in \mathcal{D}$.
Then we define the completed category $\overline{\mathcal{C}}$ to be $\mathcal{C}' [\Omega^{-1}]$, the localisation of $\mathcal{C}'$ at
$\Omega$.
This comes with a localisation functor $\pi : \mathcal{C'} \rightarrow \overline{\mathcal{C}}$.
\end{construction}

Notice that in this construction we replace each accumulation point, $z_i \in L(\mathscr{Z})$, with two distinct accumulation points, $z_i^-$ and $z_i^+$.
This is where the intuition of the first part of the construction being equivalent to a fully faithful injection of objects in $n$ discrete copies of $\mathcal{C}_1$ into $\mathcal{C}_{2n}$ comes from.

A necessary step in the proof of Theorem \ref{Thm2} is to compute the Grothendieck group of the kernel of $\pi$.
To this, one can easily see $\mathrm{ker}(\pi)$ as a collection of copies of $\mathcal{C}_1$.

\begin{proposition}\label{Prop:ker}
Let $\pi: \mathcal{C}_{2n} \rightarrow \overline{\mathcal{C}}_n$ be the localisation in Construction \ref{Cons:Cbar}.
Then we have:
\[
\ker (\pi) \cong (\mathcal{C}_1)^n
\]
i.e. the kernel of $\pi$ is equivalent to $n$ copies of $\mathcal{C}_1$.
\end{proposition}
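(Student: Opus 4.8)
The plan is to identify $\ker(\pi)$ with the subcategory $\mathcal{D}$ of Construction \ref{Cons:Cbar} and then to show that $\mathcal{D}$ splits as $n$ independent copies of $\mathcal{C}_1$, one for each opened-up accumulation point. First I would invoke the general theory of Verdier localisation: since $\Omega$ consists precisely of those morphisms whose cone lies in $\mathcal{D}$, the functor $\pi$ is the Verdier quotient $\mathcal{C}_{2n} \to \mathcal{C}_{2n}/\mathcal{D}$ in the sense set up in \cite{Paquette2020}, and the kernel of such a quotient is the thick closure of $\mathcal{D}$. Thus it suffices to check that $\mathcal{D}$ is already thick and that $\ker(\pi) = \mathcal{D}$. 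Thickness should follow from the combinatorics of $\mathcal{C}_{2n}$: the subcategory $\mathcal{D}$ is closed under $\Sigma$ (which shifts each arc by one marked point, and the marked points interior to an interval $(z_i^-,z_i^+)$ form a bi-infinite sequence whose successors and predecessors again lie in that interval), it is closed under direct summands since $\mathcal{C}_{2n}$ is Krull--Schmidt and $\mathcal{D}$ is cut out by a set of indecomposables, and it is closed under cones because the crossing triangles of two arcs lying in a common interval are built from the sides of a quadrilateral whose vertices all lie in that interval.

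Next I would record the geometric fact that, after opening up the $n$ accumulation points, the intervals $(z_1^-,z_1^+), \dots, (z_n^-,z_n^+)$ are pairwise disjoint and are separated on $S^1$ by the marked points of the original configuration. Writing $\mathcal{D}_i$ for the full additive subcategory of arcs with both endpoints in $(z_i^-,z_i^+)$, every indecomposable of $\mathcal{D}$ lies in exactly one $\mathcal{D}_i$, and two arcs in distinct intervals can neither cross (so their $\mathrm{Ext}^1$ vanishes) nor admit a nonzero morphism (by the combinatorial description of $\mathrm{Hom}$-spaces in these categories, a nonzero map forces the arcs to overlap, which separated intervals forbid). Consequently there are no extensions or morphisms between distinct factors, and $\mathcal{D} \cong \prod_{i=1}^n \mathcal{D}_i$ as triangulated categories, the finite product and coproduct coinciding here.

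It then remains to identify each $\mathcal{D}_i$ with $\mathcal{C}_1$. The marked points $\{z_{ij}\}_{j \in \mathbb{Z}}$ interior to $(z_i^-,z_i^+)$ form a bi-infinite, linearly ordered family accumulating to $z_i^+$ as $j \to +\infty$ and to $z_i^-$ as $j \to -\infty$; this is exactly the combinatorial configuration of the marked points of $\mathcal{C}_1$, whose single two-sided accumulation point is approached from both directions. I would fix the order-preserving bijection $x_j \mapsto z_{ij}$ between the marked points of $\mathcal{C}_1$ and those of the $i$-th interval, observe that it preserves the non-adjacency and crossing conditions defining arcs, and hence induces a bijection on indecomposables that commutes with $\Sigma$ and carries the crossing triangles of $\mathcal{C}_1$ to those of $\mathcal{D}_i$. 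Since the triangulated structure of each category is determined by this combinatorial data in the same way, the bijection upgrades to a triangle equivalence $\mathcal{C}_1 \xrightarrow{\sim} \mathcal{D}_i$, and assembling the $n$ factors yields $\ker(\pi) = \mathcal{D} \cong (\mathcal{C}_1)^n$.

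The main obstacle I anticipate lies in the first and third steps rather than in the geometric decomposition. Verifying that $\mathcal{D}$ is genuinely thick --- in particular closed under the cones of \emph{arbitrary} morphisms and not merely the distinguished crossing triangles --- requires care with the full combinatorial description of triangles in $\mathcal{C}_{2n}$, as does the claim that $\ker(\pi)$ is exactly $\mathcal{D}$ rather than some proper thick subcategory; here I would lean on the precise localisation results of \cite{Paquette2020}. Likewise, promoting the combinatorial bijection $\mathcal{C}_1 \cong \mathcal{D}_i$ to an honest triangle equivalence demands checking that the triangulated structure inherited by $\mathcal{D}_i$ from $\mathcal{C}_{2n}$ coincides with the intrinsic one on $\mathcal{C}_1$, which is where the argument must be made rigorous rather than merely pictorial.
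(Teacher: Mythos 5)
Your proposal is correct and follows essentially the same route as the paper: identify $\ker(\pi)$ with the subcategory $\mathcal{D}$ via the localisation results of Paquette--Yildirim (the paper cites their Lemma 3.5 directly), decompose $\mathcal{D}$ into one factor per opened-up interval using the fact that arcs in distinct segments never cross and hence have vanishing $\operatorname{Ext}$ and $\operatorname{Hom}$, and identify each factor with $\mathcal{C}_1$ by the evident combinatorial bijection on marked points. The only cosmetic difference is that you present the decomposition as a direct product while the paper peels off one segment at a time by induction on $n$.
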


\begin{proof}
By the proof of \cite[Lemma 3.5]{Paquette2020}, we know that the kernel of $\pi$ is exactly $\mathcal{D}_n$, the thick subcategory of $\mathcal{C}_{2n}$ generated by the objects corresponding to the arcs with both of their endpoints between the same two accumulation points, on alternating segments, i.e.\ for each accumulation point only one side will correspond to objects in $\mathcal{D}_n$.
What remains now is to show that $\mathcal{D}_n= (\mathcal{C}_1)^n$, which we shall do inductively.

Firstly, let us take $n=1$, then we have $\mathcal{D}_1$ corresponding to the set of arcs contained between two accumulation points.
There is an obvious canonical functor that takes arcs in $\mathcal{C}_1$ to arcs in $\mathcal{D}_1$, which is one to one.
This functor is also clearly fully faithful, meaning that it is an equivalence of categories.

Next, we assume that the result holds for $n=k$, and show that it will hold for $n=k+1$.
We have the obvious injection of full subcategories,
\[
\mathcal{D}_k \rightarrow \mathcal{C}_{2k} \rightarrow \mathcal{C}_{2k+2}
\]
meaning that we have a fully faithful functor $\mathcal{D}_k \rightarrow \mathcal{D}_{k+1}$, and so it remains to show that $\mathcal{D}_{k+1}$ is equivalent to the sum of $\mathcal{D}_k$ and $\mathcal{C}_1$.
Clearly the functor from $\mathcal{D}_k$ to $\mathcal{D}_{k+1}$ has an image of $k$ of the $k+1$ segments in $\mathcal{D}_{k+1}$, meaning that we have one segment left, which is equivalent to $\mathcal{D}_1$, and hence $\mathcal{C}_1$ by the $n=1$ case.

The disjointedness comes the fact that no arcs on different segments cross, and therefore have trivial $\Ext$ spaces, and hence trivial $\Hom$ spaces too.
Thus we have $\mathcal{D}_{k+1}$ is the sum of $\mathcal{D}_k$ and $\mathcal{C}_1$, and is hence equivalent to $n$ copies of $\mathcal{C}_1$.
\end{proof}

\subsection{The Grothendieck Group of \texorpdfstring{$\overline{\mathcal{C}}_n$}{Cn}}

Here we compute the triangulated Grothendieck group of the completed discrete cluster category of Dynkin type $A_{\infty}$ with $n$ two-sided accumulation points, $\overline{\mathcal{C}}_n$.
A similar approach to the main result used in the previous section would not be feasible, as the results in \cite{Palu2008} require the category in question to be $2$-Calabi-Yau, a property that $\overline{\mathcal{C}}_n$ does not enjoy.
Therefore a new approach must be made, one for which the 2-Calabi-Yau property is not necessary.

This new approach involves using the localisation functor used in \cite{Paquette2020} to construct the category $\overline{\mathcal{C}}_n$ from the category $\mathcal{C}_{2n}$.
As shown in Proposition \ref{Prop:ker} this essentially surjective functor $\pi$ has a kernel of $n$ copies of the category $\mathcal{C}_1$, and so fits into the short exact sequence:
\[
0 \rightarrow (\mathcal{C}_1)^n \xlongrightarrow{\rho} \mathcal{C}_{2n} \xlongrightarrow{\pi} \overline{\mathcal{C}}_n \rightarrow 0
\]

Before we get to the main result and its proof, we must provide some useful statements that hold for both $\mathcal{C}_n$ and $\overline{\mathcal{C}}_n$.
We state these lemmas for $\mathcal{C}_n$, however note that the statements and proofs are exactly the same for $\overline{\mathcal{C}}_n$.

\begin{lemma}\label{Lem:markedp}
Let $W \in \mathcal{C}_n$ be any indecomposable object such that the arc corresponding to $W$ has both endpoints in the same segment.
Then the arc corresponding to $W$ has an odd number of marked points between its two endpoints if and only if $[W]\neq 0$.
\end{lemma}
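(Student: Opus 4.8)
The plan is to shrink the arc by a sign-changing recursion until it becomes either a boundary edge (a zero object, hence of class $0$) or the shortest possible arc, and then to detect the latter by an explicit homomorphism out of $K_0(\mathcal{C}_n)$.

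First I would fix the segment containing both endpoints of $W$ and index its marked points by consecutive integers, writing $W = \{p_a, p_b\}$ with $a < b$, so that the number of marked points strictly between the endpoints is $m = b - a - 1$. A pair of adjacent marked points $\{p_c, p_{c+1}\}$ is not an arc, and appears as a zero object in the crossing triangles; recall that Lemma \ref{Lem:ExPairs} and the crossing-triangle construction of Section \ref{Sec:2} explicitly permit zero summands. The key step is to consider the crossing pair $M = \{p_a, p_{b-1}\}$ and $N = \{p_{a+1}, p_b\}$, whose quadrilateral has vertices $p_a < p_{a+1} < p_{b-1} < p_b$ and two edge-sides $\{p_a,p_{a+1}\}$ and $\{p_{b-1},p_b\}$. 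The two induced triangles give $[M]+[N]=0$ from the edge-pair and $[\{p_{a+1},p_{b-1}\}] + [\{p_a,p_b\}] = [M]+[N]$ from the remaining pair, whence $[\{p_a,p_b\}] = -[\{p_{a+1},p_{b-1}\}]$. Iterating yields $[W] = (-1)^k[\{p_{a+k},p_{b-k}\}]$ as long as the inner arc is non-degenerate. If $m$ is even the inner arc eventually becomes an edge and $[W]=0$; if $m$ is odd the process terminates at a shortest arc $W_0 = \{p_c, p_{c+2}\}$, giving $[W] = \pm[W_0]$.

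It then remains to prove that the class of a minimal arc is non-zero, for which I would build a homomorphism $\phi\colon K_0(\mathcal{C}_n) \to \mathbb{Z}$ from an alternating weight. Assign to each marked point the value $w(p_c) = (-1)^c$; this is well-defined up to a global sign on each segment, since adjacent marked points always lie in a common segment, and $\phi$ is defined on the cluster-tilting generators of the presentation supplied by Theorem \ref{Thm:Palu} by $\phi([\{x,y\}]) = w(x) + w(y)$. One checks that $\phi$ kills every exchange relation: the central objects $B_M$ and $B_{M^*}$ are the two pairs of opposite sides of a single quadrilateral, so both have weight equal to the sum of the four vertex weights, and $[B_M]-[B_{M^*}]\mapsto 0$. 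Since edges satisfy $w(p_c)+w(p_{c+1})=0$, the assignment is consistent with the vanishing of zero objects, so $\phi$ descends to $K_0(\mathcal{C}_n)$.

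The main obstacle I anticipate is precisely the well-definedness of $\phi$ together with propagating the endpoint-weight formula from the cluster-tilting generators to an arbitrary arc such as $W_0$, which is not itself a cluster-tilting object. I would resolve this by observing that the sign-changing recursion above consists of genuine Euler relations in $K_0(\mathcal{C}_n)$, so that the homomorphism $\phi$ automatically satisfies $\phi([\{p_a,p_b\}]) = -\phi([\{p_{a+1},p_{b-1}\}])$ and $\phi(\mathrm{edge}) = 0$; combined with a single crossing triangle expressing $[W_0]$ through cluster-tilting arcs, this pins down $\phi([W_0]) = (-1)^c + (-1)^{c+2} = \pm 2 \neq 0$, forcing $[W_0] \neq 0$ and completing the odd case. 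Finally, since the statement is asserted verbatim for $\overline{\mathcal{C}}_n$, I would note that the crossing triangles, the interpretation of edges as zero objects, and the weight function all transfer unchanged to the completion, so the same two-step argument applies there without invoking the $2$-Calabi-Yau property.
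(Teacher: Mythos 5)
Your first step is essentially the paper's own argument in symmetrised form: the paper shrinks the arc from one side along a one-sided fountain, using the triangles $W_i \rightarrow W_{i+1} \rightarrow \Sigma^i W_1 \rightarrow \Sigma W_i$ to get $[W_i]=0$ for even $i$ and $[W_i]=[W_1]$ for odd $i$, whereas you shrink symmetrically from both ends via nested quadrilaterals. Both are correct uses of the crossing triangles, and both reduce the lemma to the non-vanishing of the class of a shortest arc. Where you genuinely diverge is that you then attempt to prove that non-vanishing, which the paper's proof does not: it simply ``assigns'' $[W_1]$ and never shows $[W_1]\neq 0$; that fact is only implicitly supplied later by the explicit basis computations in Lemma \ref{Lemma:Triangle} and Theorem \ref{Thm2}. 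Your alternating-weight homomorphism is a reasonable idea for filling this, and its predicted value $\phi([W_0])=\pm 2$ is consistent with those later computations.

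The execution of that second step has a gap, however. Via the presentation of Theorem \ref{Thm:Palu}, your $\phi$ is defined only on the classes $[T]$ for $T\in\operatorname{ind}(\mathcal{T})$, and a shortest arc $W_0=\{p_c,p_{c+2}\}$ is in general not in $\mathcal{T}$. A single crossing triangle between $W_0$ and an arc $T=\{p_{c+1},q\}\in\mathcal{T}$ yields $[W_0]=[\{q,p_c\}]-[T]$, but the side $\{q,p_c\}$ of that quadrilateral need not lie in $\mathcal{T}$, so this does not yet express $[W_0]$ in terms of generators on which $\phi$ is known; the identity $\phi([\{x,y\}])=w(x)+w(y)$ for an arbitrary arc is precisely what remains to be proved, and invoking it for the sides is circular. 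To close this you must either exhibit an explicit finite chain of crossing relations terminating in the chosen leapfrog triangulation (this exists for the paper's $\mathcal{T}$, but has to be written down), or verify that the endpoint-weight assignment respects every Euler relation and define $\phi$ directly by the universal property of $K_0$. Separately, your closing claim that the argument transfers to $\overline{\mathcal{C}}_n$ ``without invoking the 2-Calabi-Yau property'' is not right as stated: the construction of $\phi$ rests on Theorem \ref{Thm:Palu}, which requires the 2-Calabi-Yau hypothesis; the paper sidesteps this because it only ever applies the lemma inside $\mathcal{C}_{2n}$.
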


\begin{proof}
We will show this inductively, simultaneously showing $[W]=0$ when the arc corresponding to $W$ has an even number of marked points between its endpoints.

Let $W_i$ correspond to an arc with $i \in \mathbb{Z}_{>0}$ marked points between its two endpoints, with all corresponding arcs to $W_i$ sharing one endpoint, i.e. $\{W_i\}_{i>0}$ is a one sided fountain.
It is clear to see that, as an arc, $W_0$ is isotopic to a boundary segment between two adjacent marked points, and therefore a zero object, so $[W_0]= 0$.

Next, we have $W_1$, to which we will assign $[W_1] \in K_0(\mathcal{C}_n)$.
We can then use the following triangle to find all subsequent $[W_i] \in K_0(\mathcal{C}_n)$;
\[
W_i \rightarrow W_{i+1} \rightarrow \Sigma^i W_1 \rightarrow \Sigma W_i
\]
This is easily verified as a distinguished triangle by construction of a quadrilateral of arcs, with the corresponding arcs of $W_i$ and $\Sigma^i W_1$ crossing.

From this, we can see that in $K_0(\mathcal{C}_n)$ we have;
\[
[W_{i+1}] = [W_i] + (-1)^{i}[W_1]
\]
and hence, using induction on $i$, we find 
\[ [W_i] =
  \begin{cases*}
    0 \quad& if $ i $ is even \\
    [W_1] & if $ i $ is odd
  \end{cases*}\]
\end{proof}

Recall from Figure \ref{fig:triangulation} and the proof of Theorem \ref{Thm:TriGroGroupCn} that we have the set of arcs $\{Y_1, X_2, \ldots, X_n\}$ that correspond to a basis in $K_0(\mathcal{C}_n)$.
For reference, we give a figure with these arcs on it.
\begin{figure}[h!]
\centering
\begin{tikzpicture}
\draw ({4*sin(350)},{4*cos(350)}) arc (100:440:4);
\draw[dotted] ({4*sin(10)},{4*cos(10)}) arc (80:100:4);
\draw[fill=white] ({4*sin(72)},{4*cos(72)}) circle (0.1cm);
\draw[fill=white] ({4*sin(144)},{4*cos(144)}) circle (0.1cm);
\draw[fill=white] ({4*sin(216)},{4*cos(216)}) circle (0.1cm);
\draw[fill=white] ({4*sin(288)},{4*cos(288)}) circle (0.1cm);
\draw ({4*sin(180)},{4*cos(180)}) .. controls(2,-2).. ({4*sin(98)},{4*cos(98)});
\draw ({4*sin(180)},{4*cos(180)}) -- ({4*sin(36)},{4*cos(36)});
\draw ({4*sin(180)},{4*cos(180)}) .. controls(-2,-2).. ({4*sin(252)},{4*cos(252)});
\draw ({4*sin(180)},{4*cos(180)}) -- ({4*sin(324)},{4*cos(324)});
\draw ({4*sin(180)},{4*cos(180)}) .. controls(2.2,-2.2).. ({4*sin(108)},{4*cos(108)});
\node at (2,-1.7) {$X_2$};
\node at (2.7,-2.3) {$Y_1$};
\node at (1,0) {$X_3$};
\node at (-0.8,0.1) {$X_{n-1}$};
\node at (-1.8,-2) {$X_n$};
\node at ({4.3*sin(180)},{4.3*cos(180)}) {$z_1$};
\node at ({4.3*sin(98)},{4.3*cos(98)}) {$z_2$};
\node at ({4.3*sin(36)},{4.3*cos(36)}) {$z_3$};
\node at ({4.3*sin(324)},{4.3*cos(324)}) {$z_{n-1}$};
\node at ({4.3*sin(252)},{4.3*cos(252)}) {$z_n$};
\end{tikzpicture}
\caption{The set of arcs corresponding to the chosen representatives of the basis of $K_0(\mathcal{C}_n)$.}
\label{fig:basis}
\end{figure}
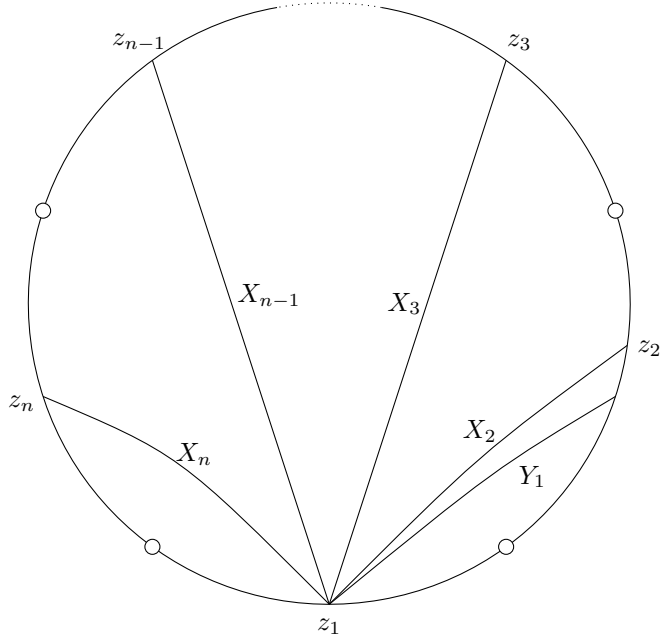

\begin{lemma}\label{Lemma:Triangle}
Let $W \in \mathcal{C}_{2n}$ correspond to any arc with both end points in the same segment.
Then $[W] \neq 0$ in $K_0(\mathcal{C}_{2n})$ if and only if there exists $j \in \mathbb{Z}$ such that $\Sigma^j W$ fits into a triangle of the following form
\[
X_i \rightarrow X \oplus \Sigma^j W \rightarrow \Sigma^l X_i\rightarrow \Sigma X_i
\]
for some $i\in \{2,\ldots,n\}$, with $l$ even and either $[X]=[X_2]+[Y_1]$ or $[\Sigma^j W]=[X_2]+[Y_1]$ when $W$ has endpoints on the same segment as $z_1$, where the objects $Y_1, X_2,\ldots,X_n$ correspond to the arcs in Figure \ref{fig:basis}.
\end{lemma}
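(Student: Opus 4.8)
The plan is to prove the biconditional by analysing both directions through the combinatorics of the crossing-induced triangles together with Lemma \ref{Lem:markedp}. Throughout I will use two elementary facts: that $[\Sigma M]=-[M]$ in any triangulated Grothendieck group (from the triangle $M\to 0\to\Sigma M\to\Sigma M$), so that $l$ even forces $[\Sigma^l X_i]=[X_i]$; and that $\Sigma^j$ preserves both the segment in which an arc lies and the number of marked points strictly between its endpoints (since the two-sided limit condition gives infinitely many marked points on each side of $z_s$ within its segment). The middle term of the displayed triangle is $X\oplus\Sigma^j W$, which by the description of crossing-induced triangles in Section \ref{Sec:2} means $X$ and $\Sigma^j W$ are the two opposite sides of the quadrilateral whose diagonals are the crossing arcs $X_i$ and $\Sigma^l X_i$.

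For the backward direction, suppose such a triangle exists with $l$ even. Writing $X_i=\{z_1,z_i\}$ and $\Sigma^l X_i=\{z_1^{-l},z_i^{-l}\}$, the cyclic order of the four endpoints makes the within-segment sides $\{z_1,z_1^{-l}\}$ and $\{z_i,z_i^{-l}\}$ opposite, the remaining two sides joining segment $1$ to segment $i$. Since $\Sigma^j W$ is segment-internal it cannot be one of the latter two, so it is $\{z_1,z_1^{-l}\}$ or $\{z_i,z_i^{-l}\}$; either way it has exactly $l-1$ marked points strictly between its endpoints. As $l$ is even, $l-1$ is odd, so Lemma \ref{Lem:markedp} gives $[\Sigma^j W]\neq0$, whence $[W]=\pm[\Sigma^j W]\neq0$. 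Note that the $[X_2]+[Y_1]$ hypothesis is not needed here; it only sharpens the statement for the forward direction.

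For the forward direction, assume $[W]\neq0$. By Lemma \ref{Lem:markedp} the arc $W$ has an odd number $2m+1$ of interior marked points, and lies in some segment whose index I call $s$ (using $z_1$ for the segment of $z_1$). Set $l:=2m+2$, which is even, and choose $j$ (and the endpoint identification) so that $\Sigma^j W=\{z_s,z_s^{-l}\}$ — possible because $\{\Sigma^j\}_{j\in\mathbb{Z}}$ acts transitively on the marked points of a fixed segment and $z_s^{-l}$ has exactly $2m+1$ points between it and $z_s$. Taking $X_i:=\{z_1,z_i\}$ with $i=s$ when $s\neq1$ (and any admissible $i$ when $s=1$), the arc $\Sigma^j W$ is one within-segment side of the quadrilateral with diagonals $X_i$ and $\Sigma^l X_i$, and the opposite side $X$ is the other within-segment side; the crossing then yields precisely a distinguished triangle of the displayed form. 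When $s\neq1$ the side $X=\{z_1,z_1^{-l}\}$ lies in segment $1$ with $l-1$ (odd) interior points, and when $s=1$ it is $\Sigma^j W$ itself that is this segment-$1$ arc.

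The main obstacle is the remaining identity $[\{z_1,z_1^{-l}\}]=[X_2]+[Y_1]$ for even $l$. By Lemma \ref{Lem:markedp} all nonzero segment-$1$ arcs share a single class, so it suffices to compute it for one representative; I would do so by resolving the minimal such arc through the triangles arising from its crossings with the fan arc $X_2$ and the leapfrog arc $Y_1$ of Figure \ref{fig:basis}, and reading off the resulting relation in $K_0(\mathcal{C}_{2n})$. The bookkeeping of this computation, together with checking that the chosen $i,j,l$ genuinely produce a crossing and a quadrilateral of the required shape in both the $s=1$ and $s\neq1$ cases, is where the care lies; these two cases are exactly what produces the alternative ``$[X]=[X_2]+[Y_1]$ or $[\Sigma^j W]=[X_2]+[Y_1]$'' in the statement, according to whether the distinguished segment-$1$ side plays the role of $X$ or of $\Sigma^j W$.
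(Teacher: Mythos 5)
Your backward direction is sound and genuinely different from the paper's: the paper simply reads the relation $[X]+[\Sigma^j W]=[X_i]+[\Sigma^l X_i]=2[X_i]$ off the triangle and invokes linear independence of $[X_i],[X_2],[Y_1]$ in $K_0(\mathcal{C}_{2n})$, whereas you identify $\Sigma^j W$ combinatorially as a within-segment side of the quadrilateral, having $l-1$ (odd) interior marked points, and apply Lemma \ref{Lem:markedp}. Both work; yours needs the small extra observation that a triangle of the displayed form whose middle term has a segment-internal summand cannot be split (by Krull--Schmidt that summand would have to be $X_i$ or $\Sigma^l X_i$), hence is the crossing-induced one, and it has the mild advantage of not using the normalisation hypothesis on $[X]$.

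The forward direction, however, has a genuine gap at exactly the point that matters: the identity $[X]=[X_2]+[Y_1]$, which you defer rather than prove. Two problems with the deferral. First, the reduction ``all nonzero segment-$1$ arcs share a single class'' is false: since $[\Sigma M]=-[M]$, shifting the basepoint of a minimal arc by one marked point negates its class, so segment-internal classes are only constant up to sign. What is true, and what you actually need, is that the arcs $\{z_1^{-l},z_1\}$ for $l$ even all share the fixed endpoint $z_1$, hence form a one-sided fountain and all carry the class of $\{z_1^{--},z_1\}$ by the recursion in the proof of Lemma \ref{Lem:markedp}. Second, the computation you propose for the representative --- resolving the minimal arc ``through the triangles arising from its crossings with the fan arc $X_2$ and the leapfrog arc $Y_1$'' --- cannot be carried out as described: $\{z_1^{--},z_1\}$ shares the endpoint $z_1$ with both $X_2=\{z_1,z_2\}$ and $Y_1=\{z_1,z_2^-\}$ and therefore crosses neither, so there are no such triangles. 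The paper instead uses the crossing of $X_2$ with $\Sigma^l Y_1$: the resulting quadrilateral has $X=\{z_1^{-l},z_1\}$ and an arc $U$ with $l$ (even) interior marked points as opposite sides, giving $[X]=[X]+[U]=[X_2]+[\Sigma^l Y_1]=[X_2]+[Y_1]$. Without this (or an equivalent) computation your forward implication establishes only the existence of the triangle, not the normalisation of $[X]$, which is precisely the part of the statement that the proof of Theorem \ref{Thm2} relies on.
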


\begin{proof}
($\Rightarrow$) Suppose we have $[W] \neq 0$, and we have $j \in \mathbb{Z}$ such that the corresponding arcs of $\Sigma^j W$ and $X_i$ share an endpoint.
Note that there will be two possible options for $j$, and the one we want depends upon out choice of representation of the basis for  $K_0(\mathcal{C}_n)$.
We one we want the one such that $\Sigma X_i$ will cross $\Sigma^j W$.

As $[W] \neq 0$, we know from Lemma \ref{Lem:markedp} that $W$ has an odd number of marked points between the endpoints of its corresponding arc, and hence shares an endpoint with the arc corresponding to $\Sigma^l X_i$, with $l$ being one more than the number of marked points between the endpoints of $W$, and hence even.
Thus, as $[X_i]$ and $[\Sigma^l X_i]$ cross, we have a triangle of the form
\[
X_i \rightarrow X \oplus \Sigma^j W \rightarrow \Sigma^l X_i\rightarrow \Sigma X_i
\]

All we have to show now is that $[X] = [X_2]+[Y_1]$.
To show this, consider that the arcs corresponding to $W$ and $X$ have the same number of marked  points between their endpoints, due to $X$ having both endpoints in the same segment (a different segment to the endpoints of $W$) and both $X$ and $W$ sharing an endpoint with both $X_i$ and $\Sigma^l X_i$.
This means that $[X] \neq 0$.
Now let us consider the triangle
\[
X_2 \rightarrow X \oplus U \rightarrow \Sigma^l Y_1 \rightarrow \Sigma X_2
\]
where $U$ is the unique object that makes this a distinguished triangle, up to isomorphism.

Given the suspension functor,$\Sigma$, acts on the arcs by rotating each endpoint to the next marked point in a clockwise direction, and the arc corresponding to $Y_1$ has one shared endpoint, $z_1$, with $X_2$ and the other endpoint of $Y_1$, $z_2^-$, is at the marked point directly clockwise of the other endpoint of $X_2$, $z_2$, then we have that $U$ has its endpoints on the same segment, and there is one $l$ marked points between its endpoints.
Hence, as $l$ is even, we have $[U]=0$ by Lemma \ref{Lem:markedp}, and so $[X]=[X_2]+[Y_1]$.

For the case where $[\Sigma^l W]=[X_2]+[Y_1]$, we simply assume the arc corresponding to $W$ is on the same segment as $z_1$, and so the same argument for $[X]=[X_2]+[Y_1]$ holds.

($\Leftarrow$) It is immediately obvious that we have $[W] + [X_2]+[Y_1] = [X_i] + (-1)^l[X_i]$, and so we have $[W] = 2[X_i] - [X_2]-[Y_1] \neq 0$, as the set $\{[X_i],[X_2],[Y_1]\}$ is linearly independent for $i=3,\ldots,n$, and if $i=2$ we have $[W]=[X_2]-[Y_1]\neq 0$. 
\end{proof}

With these results, we are now ready to state and prove the main result of this section.

\begin{theorem}\label{Thm2}
Let $\overline{\mathcal{C}}_n$ be the completion of a discrete cluster category of Dynkin type $A_{\infty}$ with $n$ two-sided accumulation points.
Then $\overline{\mathcal{C}}_n$ has the triangulated Grothendieck group:
\[
K_0(\overline{\mathcal{C}}_n) \cong \mathbb{Z}^n \oplus (\mathbb{Z}/2\mathbb{Z})^{n-1}
\]
\end{theorem}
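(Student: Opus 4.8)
The plan is to pass the Verdier localization sequence
\[
0 \rightarrow (\mathcal{C}_1)^n \xlongrightarrow{\rho} \mathcal{C}_{2n} \xlongrightarrow{\pi} \overline{\mathcal{C}}_n \rightarrow 0
\]
to Grothendieck groups. By Construction \ref{Cons:Cbar} the functor $\pi$ is the localization of $\mathcal{C}_{2n}$ at the morphisms whose cones lie in $\mathcal{D}_n$, so $\overline{\mathcal{C}}_n$ is the Verdier quotient of $\mathcal{C}_{2n}$ by the thick subcategory $\ker(\pi) = \mathcal{D}_n \cong (\mathcal{C}_1)^n$ of Proposition \ref{Prop:ker}. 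The standard fact that a Verdier quotient induces a right-exact sequence on triangulated Grothendieck groups then gives
\[
K_0((\mathcal{C}_1)^n) \xlongrightarrow{\rho_*} K_0(\mathcal{C}_{2n}) \xlongrightarrow{\pi_*} K_0(\overline{\mathcal{C}}_n) \rightarrow 0,
\]
whence $K_0(\overline{\mathcal{C}}_n) \cong K_0(\mathcal{C}_{2n}) / \operatorname{im}(\rho_*)$. It therefore suffices to identify the two outer groups together with the map $\rho_*$.

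First I would fix the groups. Theorem \ref{Thm:TriGroGroupCn} applied to $\mathcal{C}_{2n}$ gives $K_0(\mathcal{C}_{2n}) \cong \mathbb{Z}^{2n}$, free on the basis $\{[Y_1], [X_2], \ldots, [X_{2n}]\}$ of Figure \ref{fig:basis}. Proposition \ref{Prop:ker} together with the case $n=1$ of Theorem \ref{Thm:TriGroGroupCn}, which gives $K_0(\mathcal{C}_1) \cong \mathbb{Z}$, yields $K_0((\mathcal{C}_1)^n) \cong \mathbb{Z}^n$; its $i$-th free generator I take to be the class $[W^{(i)}]$ of a shortest nontrivial arc, one with a single marked point between its endpoints, supported on the $i$-th of the $n$ alternating segments carrying $\mathcal{D}_n$. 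These classes are nonzero by Lemma \ref{Lem:markedp} and hence generate the $n$ summands.

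Next I would compute $\rho_*$. Since $\rho$ is an inclusion of subcategories, $\rho_*[W^{(i)}]$ is the class of $W^{(i)}$ computed inside $K_0(\mathcal{C}_{2n})$. Each $W^{(i)}$ has both endpoints in a single segment and nonzero class, so Lemma \ref{Lemma:Triangle} applies and yields
\[
\rho_*[W^{(i)}] = 2[X_{k_i}] - [X_2] - [Y_1],
\]
where $k_1, \ldots, k_n$ are the distinct indices labelling the segments on which $\mathcal{D}_n$ is supported. Thus $\operatorname{im}(\rho_*)$ is generated by these $n$ elements, and the coefficient $2$ here is precisely the source of the torsion.

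Finally the cokernel is a routine Smith normal form computation. Over $\mathbb{Q}$ the $n$ generators are independent, since their leading terms $2[X_{k_i}]$ involve the distinct basis vectors $[X_{k_i}]$, so the image has rank $n$ and the free part of the quotient is $\mathbb{Z}^{2n-n} = \mathbb{Z}^n$. Writing $w = [X_2] + [Y_1]$, the relations read $2[X_{k_i}] = w$; taking differences produces the $n-1$ independent order-two elements given by $2([X_{k_i}] - [X_{k_n}]) = 0$ for $i < n$, while the one remaining relation is primitive and eliminates a single free generator. A change of basis separating these torsion classes from the free ones then exhibits $K_0(\overline{\mathcal{C}}_n) \cong \mathbb{Z}^n \oplus (\mathbb{Z}/2\mathbb{Z})^{n-1}$. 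The main obstacle is the opening step: one must verify carefully that the Paquette--Yildirim localization really is the Verdier quotient by the thick subcategory $\ker(\pi)$, so that the right-exact localization sequence on $K_0$ is legitimately available in this non-$2$-Calabi--Yau setting; granting this, the rest is the identification via Lemma \ref{Lemma:Triangle} and the linear algebra above.
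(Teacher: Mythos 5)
Your proposal is correct and follows essentially the same route as the paper: the right-exact $K_0$ sequence for the Verdier localization, identification of both outer groups via Theorem \ref{Thm:TriGroGroupCn} and Proposition \ref{Prop:ker}, computation of $\rho_*$ on generators via Lemma \ref{Lemma:Triangle}, and a cokernel calculation. The only deviation is that you apply the generic formula $2[X_{k_i}]-[X_2]-[Y_1]$ uniformly, whereas the paper invokes the special clause of Lemma \ref{Lemma:Triangle} for the segment containing $z_1$ to get $[\rho(M_1)]=[X_2]+[Y_1]$; the resulting subgroups differ but have the same cokernel, so your conclusion is unaffected.
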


\begin{proof}
We have the localisation functor $\pi \colon \mathcal{C}_{2n} \rightarrow \overline{\mathcal{C}}_n$, which has kernel $n$ copies of $\mathcal{C}_1$ by Proposition \ref{Prop:ker}.
This means that we have a short exact sequence of categories (in the language of \cite{Schlichting2003}):
\[
0 \rightarrow (\mathcal{C}_1)^n \xlongrightarrow{\rho} \mathcal{C}_{2n} \xlongrightarrow{\pi} \overline{\mathcal{C}}_n \rightarrow 0
\]
We also use the fact that the Grothendieck group functor, $K_0(-)$, is right exact \cite[Facts 1.2]{Schlichting2003}, and so that means we have the commutative diagram with exact rows:
\[
\begin{tikzcd}
& K_0((\mathcal{C}_1)^n) \arrow[r,"f"] \arrow[d, "\cong"] & K_0(\mathcal{C}_{2n}) \arrow[r,"g"] \arrow[d,"\cong"] & K_0(\overline{\mathcal{C}}_n) \arrow[r] \arrow[d,"\cong"] & 0\\
& \mathbb{Z}^n \arrow[r,"f"] & \mathbb{Z}^{2n} \arrow[r,"g"] & K_0(\overline{\mathcal{C}}_n) \arrow[r] & 0
\end{tikzcd}
\]
where the first and second isomorphisms come from Theorem \ref{Thm:TriGroGroupCn}.

Therefore, given $g$ is a surjection, it is sufficient to describe the map $f$ and find its cokernel.

We know that the functor $\rho: (\mathcal{C}_1)^n \rightarrow \mathcal{C}_{2n}$ is injective on objects, with each copy of $\mathcal{C}_1$ mapping into alternating segments in $\mathcal{C}_{2n}$.
This is due to the choice of $\mathcal{D}$ by Paquette and Yildirim in the construction of $\overline{\mathcal{C}}_n$ \cite{Paquette2020}.
We will label the basis elements of $K_0((\mathcal{C}_1)^n)$ as $\{ [M_1],\ldots, [M_n]\}$, with corresponding arcs $M_1,\ldots, M_n \in (\mathcal{C}_1)^n$.
Further, we shall choose the labelling such that $\rho (M_1) \cong X$, where $X$ is the object corresponding to the arc $\{z_1^{--},z_1\}$, and so we have $f([M_1])=[\rho(M_1)]=[X]$.
Generally we say that the arc $\{z_{2i-1}^{--},z_{2i-1}\}$ corresponds to the object $\rho(M_i)$, and so $[\rho(M_i)] \neq 0$ by Lemma \ref{Lem:markedp}.

Notice that if we choose an object, $W$, such that corresponds to the arc $\{z_i^{--},z_i\}$ and so $[W] \neq 0$ by Lemma \ref{Lem:markedp}, then $X$ is the unique object, corresponding to the arc $\{z_1^{--},z_1\}$, fitting into the distinguished triangle
\[
X_i \rightarrow X \oplus W \rightarrow \Sigma^2 X_i\rightarrow \Sigma X_i
\]
and so $[X]=[X_2]+[Y_1]$, by Lemma \ref{Lemma:Triangle}.

Given $[\rho(M_i)] \neq 0$ we may now apply Lemma \ref{Lemma:Triangle}, giving us the triangle
\[
X_{2i-1} \rightarrow X \oplus \rho(M_i) \rightarrow \Sigma^2 X_{2i-1} \rightarrow \Sigma X_{2i-1}
\]
with $[X]=[X_2]+[Y_1]$.
This in turn means we have

\[ [\rho(M_i)] =
  \begin{cases*}
    2[X_{2i-1}]-[X_2]-[Y_1] \quad& if $ i = 2,\ldots, n$ \\
    [X_2]+[Y_1] & if $ i =1$
  \end{cases*}\]
By summing $[\rho(M_i)]$ and $[\rho(M_1)]$, we get $[\rho(M_i)]+[\rho(M_1)]=2[X_{2i-1}]$, and so we get the image of $f$ being the span of the elements $\{[X_2] + [Y_1], 2[X_{2i-1}]\}$ for all $i=2,\ldots,n$.
This gives us $\operatorname{im}(f) \cong (2\mathbb{Z})^{n-1} \oplus \mathbb{Z}$, and so by the exact sequence
\[
\mathbb{Z}^n \xrightarrow{f} \mathbb{Z}^{2n} \xrightarrow{g} K_0(\overline{\mathcal{C}_n}) \rightarrow 0
\]
we get $K_0(\overline{\mathcal{C}_n}) \cong \mathbb{Z}^{2n}/ \operatorname{ker}(g) \cong \mathbb{Z}^{2n}/ \operatorname{im}(f)$, where the last equivalence comes from the above sequence being exact.
Hence we get

\[
K_0(\overline{\mathcal{C}}_n) \cong \mathbb{Z}^n \oplus (\mathbb{Z}/2\mathbb{Z})^{n-1}
\]
\end{proof}

\printbibliography

\end{document}